\tikzset{neg/.style={
		decoration={markings,
			mark= at position 0.5 with {
				\node[transform shape] (tempnode) {$\setminus$};
			}
		},
		postaction={decorate}
}}
\newtheorem{theorem}{Theorem}[section]
\newtheorem{proposition}[theorem]{Proposition}
\newtheorem{corollary}[theorem]{Corollary}
\newtheorem{lemma}[theorem]{Lemma}
\theoremstyle{definition}
\newtheorem*{definition*}{Definition}
\newtheorem*{proposition*}{Proposition}
\newtheorem*{theorem*}{Theorem}
\newtheorem*{corollary*}{Corollary}
\newtheorem*{example*}{Example}
\newtheorem*{problem*}{Problem}
\theoremstyle{remark}
\newcommand{\degree}{\operatorname{deg}}
\newcommand*{\probb}[2]{\mathbb{P}_{#1}\left\{ #2 \right\}}
\newcommand*{\tmix}{t_{\operatorname{mix}}}
\newcommand*{\trel}{t_{\operatorname{rel}}}
\newcommand*{\Var}{\operatorname{Var}}
\begin{document}
	
	\title{No cutoff in Spherically symmetric trees}
	
	\author[Chiclana]{Rafael Chiclana}
	
	\author[Peres]{Yuval Peres}
	
	\address{Kent State University, Kent, Ohio}
	\email{rchiclan@kent.edu, yuval@yuvalperes.com}

	\keywords{Markov chains; random walks;mixing time;tree;cutoff}

	\date{May 25th, 2022}

	\begin{abstract}
		We show that for lazy simple random walks on finite spherically symmetric trees, the ratio of the mixing time and the relaxation time is bounded by a universal constant. Consequently, lazy simple random walks on any sequence of finite spherically symmetric trees do not exhibit pre-cutoff; this conclusion also holds for continuous-time simple random walks.  This answers a question recently proposed by Gantert, Nestoridi, and Schmid. We also show that for lazy simple random walks on finite spherically symmetric trees, hitting times of vertices are (uniformly) non concentrated. Finally, we study the stability of our results under rough isometries.
	\end{abstract}
	
	\maketitle
	
	\thispagestyle{plain}

\section{Introduction}
Random walks on certain families of graphs exhibit the cutoff phenomenon, which is a fast transition in the convergence to the stationary distribution (see, e.g.,  \cite{ben-hamou2019comparing} and \cite{lubetzky2010cutoff}). In this note we focus on  families of trees. Peres and Sousi presented in \cite{peres2015total} a first example of a sequence of trees on which the lazy simple random walk exhibits cutoff. More recently, Gantert, Nestoridi, and Schmid gave a sufficient condition to guarantee that the lazy simple random walk on a sequence of trees exhibits cutoff (see \cite[Theorem 1.6]{gantert2020cutoff}). They also showed that, in some sense,  cutoff on trees is a rare phenomenon. More concretely, the authors presented in \cite{gantert2020cutoff} some estimations on the mixing time and relaxation time to show that the families of the (continuous-time) simple random walks on several classes of trees, including Galton-Watson trees, do not exhibit cutoff. Among other results, it is proved that if $T$ is an infinite spherically symmetric tree of maximum degree $\Delta$, and $(T_n)_{n \in \mathbb{N}}$ is a family of trees obtained by truncating $T$ to its first $n$ levels, for all $n \in \mathbb{N}$, then the family of the (continuous-time) simple random walks on $(T_n)_{n\in \mathbb{N}}$ does not exhibit cutoff. The main goal of this paper is to answer Question 6.1 in \cite{gantert2020cutoff}, that asks whether the assumption in the above result on having a bounded maximum degree can be relaxed. As a consequence of our main result, if $(T_n)_{n \in \mathbb{N}}$ is any sequence of finite spherically symmetric trees, then the family of the (continuous-time) simple random walks on $(T_n)_{n \in \mathbb{N}}$ does not exhibit cutoff. This answers the last question, but also shows that the trees $T_n$ do not need to be truncations of a single infinite spherically symmetric tree. 

Theorem 1.1 in \cite{chen2013comparison} shows that cutoff for the (continuous-time) simple random walks on $(T_n)_{n \in \mathbb{N}}$ is equivalent to cutoff for the (discrete-time) lazy simple random walks on $(T_n)_{n \in \mathbb{N}}$, so it is enough to study the last ones. The next result shows that the ratio of the mixing time and the relaxation time of the lazy simple random walk on a spherically symmetric tree is bounded by a universal constant. Then criterion (\ref{criterion}) gives that $(T_n)_{n\in\mathbb{N}}$ does not exhibit pre-cutoff, which is a weak version of cutoff (see Section \ref{section2}).
\begin{theorem}\label{theo}
	There exists a universal constant $C>0$ so that the lazy simple random walk on a spherically symmetric tree $T$ satisfies
	\begin{equation}\label{eqtheo} \trel \geq C \tmix.
	\end{equation}
	Consequently, if $(T_n)_{n\in\mathbb{N}}$ is a sequence of finite spherically symmetric trees, then the family of the lazy simple random walks on $(T_n)_{n \in \mathbb{N}}$ does not exhibit pre-cutoff.
\end{theorem}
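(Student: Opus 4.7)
My plan is to match $\tmix(T)$ and $\trel(T)$ up to universal constants by identifying, via spherical symmetry, a single extremal cut of $T$ that governs both quantities.

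The key reduction is a symmetrization principle. Because the root-fixing automorphism group of $T$ acts transitively on each level $L_k$, any set $S\subseteq T$ admits a symmetrized version $\tilde S$ (averaging over the group action) with the same stationary mass and no larger conductance, and worst-case starting points for hitting-time problems can be symmetrized analogously. The symmetric subsets of $T$ have a very restricted form: up to complement, each is a union $S_{k,j}$ of $j$ isomorphic subtrees rooted at level $k+1$, parametrized by $0 \le k < n$ and $1\le j\le N_{k+1}$, where $N_k=|L_k|$. For each such cut, the outgoing flow is $Q(S_{k,j},S_{k,j}^c) = j/(4|E|)$ and the balanced mass $\pi(S_{k,j})\pi(S_{k,j}^c)$ is explicit in terms of the branching sequence $(c_k)$ and the level sizes $(N_k)$.

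The proof is then completed by two matching inequalities. The Rayleigh quotient applied to $f=\mathbf 1_{S_{k,j}}$ gives the upper bound on the gap
\[
\operatorname{gap}(T) \le C\,\min_{k,j}\frac{Q(S_{k,j},S_{k,j}^c)}{\pi(S_{k,j})\pi(S_{k,j}^c)} .
\]
For the mixing time, I would apply the Peres--Sousi hitting-time characterization, which says $\tmix\asymp\max_{x,\,\pi(A)\ge 1/2}\mathbb E_x[\tau_A]$ for any reversible lazy chain; the symmetrization principle lets me restrict the worst pair $(x,A)$ to one where $A=S_{k,j}^c$ and $x$ sits at a deepest leaf inside some excluded subtree, and the resulting hitting time can be computed in closed form via the effective-resistance formula on the tree, yielding
\[
\tmix(T) \le C\,\max_{k,j}\frac{\pi(S_{k,j})\pi(S_{k,j}^c)}{Q(S_{k,j},S_{k,j}^c)} .
\]
Since the two extrema are achieved at the same pair $(k,j)$, we obtain $\trel(T)\ge C'\tmix(T)$, which is~\eqref{eqtheo}.

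The main obstacle I expect is in the alignment step: one must show that the pair $(k,j)$ minimizing conductance also (up to a universal constant) maximizes worst-case hitting time, and in particular that the spherical-symmetry constraint $c_k\ge 1$ is exactly what forces these two optimizations to coincide. The delicate point is handling the transition between level-type cuts ($j=N_{k+1}$) and single-subtree cuts ($j=1$): one must verify that intermediate values of $j$ never produce a genuine bottleneck that $\trel$ misses. Once \eqref{eqtheo} is in place, the no-pre-cutoff consequence for the family $(T_n)_{n\in\mathbb N}$ is immediate from the standard criterion that pre-cutoff is equivalent to $\tmix_n/\trel_n\to\infty$, which is precluded by the universal constant $C$.
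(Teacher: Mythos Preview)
Your approach has a genuine gap: the indicator test functions $\mathbf 1_{S_{k,j}}$ are not sharp enough to yield $\trel\ge C\tmix$. Take $T$ to be the path on $n+1$ vertices (a spherically symmetric tree with $\deg_0=1$ and no branching). Every $S_{k,j}$ is then the segment below level $k$, and one computes $Q(S_{k,1},S_{k,1}^c)\asymp 1/n$, $\pi(S_{k,1})\pi(S_{k,1}^c)\asymp k(n-k)/n^2$, hence
\[
\max_{k}\frac{\pi(S_{k,1})\pi(S_{k,1}^c)}{Q(S_{k,1},S_{k,1}^c)}\asymp n,
\]
whereas $\tmix\asymp n^2$. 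So your proposed inequality $\tmix\le C\max_{k,j}\pi(S_{k,j})\pi(S_{k,j}^c)/Q(S_{k,j},S_{k,j}^c)$ is false by a factor of $n$. Equivalently, the Rayleigh quotient with indicators only delivers the Cheeger-type bound $\trel\ge c/\Phi$, which misses the square in $\trel\asymp n^2$; on the path the near-optimal test function is linear, not an indicator. The ``alignment step'' you flagged as an obstacle is therefore not a technicality but the actual failure mode. (A secondary issue: the sets $S_{k,j}$ with $1\le j<N_{k+1}$ are not invariant under the root-fixing automorphism group, so they are not produced by averaging over orbits; the truly symmetric sets are unions of full levels.)

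The paper's proof avoids this by never trying to realize $\tmix$ and $\trel$ through the same conductance functional. It first bounds $\tmix$ above via a coupling argument, by $\mathbb E_o(\tau_{v^*})+2\mathbb E_v(\tau_{v^*})$, where $v^*$ is the first branching point and $v$ a deepest leaf. It then splits into two cases according to which of these hitting times dominates. When the branching part dominates, the lower bound on $\trel$ comes from Aldous's inequality $\trel\ge \frac{\pi(A)}{\pi(A^c)}\mathbb E_\pi(\tau_A)$ applied to roughly half the subtrees below $v^*$; this is a hitting-time bound, not a conductance bound, and it matches $\mathbb E_v(\tau_{v^*})$ up to constants via a concavity estimate. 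When the initial segment dominates, the paper uses a \emph{linear} test function along that segment, which recovers the missing square and gives $\trel\ge c\,\ell^2$. Neither step is a single-cut conductance comparison, and that is precisely why the argument succeeds on path-like trees where your scheme does not.
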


Although the proof of Theorem \ref{theo} does not optimize the constant, it proves that we can take $C= \frac{1}{144}$.

Let $(X_t)$ be the lazy simple random walk on a connected graph $G=(V,E)$. Given $A \subseteq V$, write $\tau_A$ for the first time that $(X_t)$ hits $A$, that is, $\tau_A=\inf\{t\geq 0 \colon X_t\in A\}$. When $A=\{a\}$ we simply write $\tau_a$. Similarly, $\tau_a^+=\inf\{t\geq 1\colon X_t=a\}$. In Section \ref{section4}, we study concentration of hitting times of vertices on spherically symmetric trees. This is also studied in \cite{norris2017surprise} for general Markov chains. For the lazy simple random walk on a simple graph, one can use Chebyshev's inequality to extract from Theorem 1.2 in \cite{norris2017surprise} the following lower bound of the variance.

\begin{proposition}
	There is a universal constant $D>0$ so that the lazy simple random walk on a simple graph $G=(V,E)$ with $n\geq 2$ vertices satisfies
	\[ \Var_x(\tau_y)\geq D\frac{\mathbb{E}_x(\tau_y)^2}{(\log{n})^2}, \quad \forall \, x, y \in V.\]
\end{proposition}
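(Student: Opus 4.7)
The plan is to combine the pointwise hitting-time estimate of \cite{norris2017surprise} with a Chebyshev-type argument. Applied to the lazy simple random walk on a simple graph with $n\geq 2$ vertices, Theorem~1.2 of that paper provides a universal constant $K>0$ such that
\[
\mathbb{P}_x(\tau_y=t)\;\leq\;\frac{K\log n}{\mathbb{E}_x(\tau_y)}\qquad\text{for all distinct } x,y\in V \text{ and all } t\in\mathbb{N}.
\]
I would fix a pair $x\neq y$ (the case $x=y$ is trivial) and write $m=\mathbb{E}_x(\tau_y)$.

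In the main range $m\geq C\log n$, I would choose $L$ of order $m/\log n$, concretely $L=\lfloor m/(8K\log n)\rfloor$. Summing the pointwise bound over the integers in the window $[m-L,m+L]$ gives
\[
\mathbb{P}_x\bigl(|\tau_y-m|\leq L\bigr)\;\leq\;(2L+1)\,\frac{K\log n}{m}\;\leq\;\tfrac12,
\]
so that $\mathbb{P}_x(|\tau_y-m|>L)\geq 1/2$. The elementary estimate $\Var_x(\tau_y)\geq L^{2}\,\mathbb{P}_x(|\tau_y-m|>L)$ (Chebyshev read in reverse) then yields $\Var_x(\tau_y)\geq D\,m^{2}/(\log n)^{2}$ for an explicit universal $D$.

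For the complementary range $m<C\log n$, the right-hand side of the claim is bounded by an absolute constant $DC^{2}$, so it suffices to exhibit a universal positive lower bound on $\Var_x(\tau_y)$. Here I would exploit the laziness bound $\max_t\mathbb{P}_x(\tau_y=t)\leq 1/2$: the same window-summing trick, applied with $p=1/2$ in place of $K\log n/m$, spreads $\tau_y$ across a window of bounded size and supplies an absolute lower bound on $\Var_x(\tau_y)$. Choosing $D$ small enough absorbs the resulting factor and delivers a single $D$ that works in both regimes.

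The only real obstacle is to verify that Theorem~1.2 of \cite{norris2017surprise} is indeed available in the pointwise form used above (as opposed to, e.g., a one-sided tail bound or a statement involving the maximal hitting time), and then to juggle the constants across the two regimes so as to produce a single universal~$D$.
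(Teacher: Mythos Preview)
Your proposal is correct and matches the paper's approach: the paper does not give a detailed proof of this proposition but merely states that ``one can use Chebyshev's inequality to extract from Theorem~1.2 in \cite{norris2017surprise}'' the stated variance bound, and your argument is precisely the natural elaboration of that sentence. The pointwise surprise-probability bound you invoke is indeed the content of Theorem~1.2 in \cite{norris2017surprise} for lazy reversible chains, so your only stated obstacle is not an obstacle.
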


The above bound can be improved if the simple graph is a spherically symmetric tree.

\begin{theorem}\label{theo:hitting}
	There is a universal constant $C'>0$ so that the lazy simple random walk on a spherically symmetric tree $T$ satisfies
	\[ \Var_x(\tau_y^+)\geq C' \mathbb{E}_x(\tau_y^+)^2 \quad\forall \, x,y \in T.\]
\end{theorem}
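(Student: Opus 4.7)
Root $T$ and let $z$ be the least common ancestor of $x$ and $y$. By the strong Markov property at $\tau_z$, the identity $\tau_y = \tau_z + (\tau_y\circ\theta_{\tau_z})$ splits $\tau_y$ into independent summands, so $\mathbb{E}_x\tau_y = \mathbb{E}_x\tau_z + \mathbb{E}_z\tau_y$ and $\Var_x\tau_y = \Var_x\tau_z + \Var_z\tau_y$; via $(a+b)^2\leq 2(a^2+b^2)$, it suffices to prove the inequality (up to halving the constant) when $y$ is a strict ancestor of $x$ (Case A) or a strict descendant of $x$ (Case B). The diagonal case $x=y$ is reduced to the other cases by first-step analysis, exploiting $\mathbb{P}(\tau_x^+=1)=1/2$.

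\emph{Case A.} By spherical symmetry, the projection $(|X_t|)$ of the walk onto the level function is a lazy birth-death chain. Since any walker ascending from $x$ first reaches level $|y|$ at $y$ itself, we have $\tau_y = \sum_{j=|y|+1}^{|x|} T_j$, where $T_j$ is the independent time to climb from level $j$ to level $j-1$. Writing $d_j$ for the common number of children of a level-$j$ vertex and $E_j = \mathbb{E} T_j$, first-step conditioning at each level gives the recursion $E_j = 2(d_j+1) + d_j E_{j+1}$ and, after a short computation, the identity
\[
\Var T_j = E_j^2 + d_j\,\mathbb{E}(T_{j+1}^2) - 2(d_j+1).
\]
This immediately gives the pointwise bound $\Var T_j \geq \tfrac12 E_j^2$, but summing term-wise loses a factor proportional to $|x|-|y|$. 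The real work is to prove the aggregate estimate
\[
V^*_j := \sum_{i=j}^{|x|}\Var T_i \;\geq\; c\,E_j\,E^*_j,\qquad E^*_j := \sum_{i=j}^{|x|} E_i,
\]
by downward induction on $j$; the identity above lets the inductive step balance after expanding $E^*_j = E_j + E^*_{j+1}$ and absorbing the resulting cross-terms into the $d_j\,\mathbb{E}(T_{j+1}^2)$ contribution. Combining with $\sum_j E_j E^*_j \geq \tfrac12(\sum_j E_j)^2$ then yields $\Var_x\tau_y \geq c'(\mathbb{E}_x\tau_y)^2$.

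\emph{Case B.} Induct on $d := d(x,y)$. Let $c$ be the child of $x$ on the path to $y$; by strong Markov at $\tau_c$, both expectations and variances split additively, $\mathbb{E}_x\tau_y = \mathbb{E}_x\tau_c + \mathbb{E}_c\tau_y$ and $\Var_x\tau_y = \Var_x\tau_c + \Var_c\tau_y$. Via $(a+b)^2\leq 2(a^2+b^2)$, it suffices to treat the base case $d=1$ on $\tau_c$ from $x$, and, inductively, $\tau_y$ from $c$ (distance $d-1$). For the base case, analyze excursions at $x$: each move from $x$ hits $y$ with probability $1/\deg(x)$, so the number of wrong excursions before $\tau_y$ is geometric with this parameter. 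If $\deg(x)\geq 2$, this parameter is at most $1/2$ and the geometric count satisfies $\Var \geq \mathbb{E}^2$; combined with Wald's identity, this yields $\Var_x\tau_y \gtrsim (\mathbb{E}_x\tau_y)^2$. If $\deg(x)=1$, then $y$ is the unique neighbor of $x$ and $\tau_y$ is a shifted geometric random variable, giving the bound directly.

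\emph{Main obstacle.} The delicate step is the aggregate estimate in Case A. When the tree has a long path-like segment (many consecutive levels with $d_j=1$), the $T_j$'s are comparable in magnitude, and term-wise variance bounds summed via Cauchy--Schwarz lose a factor linear in the segment length. The strengthened inductive hypothesis $V^*_j \geq c\,E_j E^*_j$, driven by the identity $\Var T_j = E_j^2 + d_j\,\mathbb{E}(T_{j+1}^2) - 2(d_j+1)$ coupling $\Var T_j$ to the second moment of $T_{j+1}$, is what allows the telescoping to close and produces a universal constant independent of the tree's depth.
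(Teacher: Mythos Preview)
Your reduction via the least common ancestor and the first-step treatment of $x=y$ match the paper. But both Case~A and Case~B contain gaps.

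\textbf{Case B is the clear failure.} Your induction on $d=d(x,y)$ does not close: splitting at the child $c$ and invoking $(a+b)^2\le 2(a^2+b^2)$ halves the constant at every step, so after $d$ iterations the constant has decayed by a factor $2^{-d}$. Concretely, take $T$ to be a path of length $n$ rooted at an endpoint $x$, with $y$ the other endpoint: your argument yields only $\Var_x\tau_y\ge 2^{-n}C'(\mathbb{E}_x\tau_y)^2$. This is precisely the obstacle you diagnose in Case~A (``term-wise variance bounds summed via Cauchy--Schwarz lose a factor linear in the segment length''), yet you walk into it here. The paper sidesteps this by a one-shot estimate rather than induction on distance: writing $\tau_y=R+S$ with $S$ the total time the walk spends in $G_{x,y}$ (the components of $T\setminus\{x\}$ not containing $y$), positive correlation gives $\Var_x\tau_y\ge\Var S\ge\mathbb{E}(S)^2$, and an occupation-time identity yields $\mathbb{E}(S)\ge\pi(G_{x,y})\,\mathbb{E}_x\tau_y$. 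Spherical symmetry forces $\pi(G_{x,y})\ge\tfrac12$ once $x$ is at or below the first branching point; the residual initial-segment piece is handled by Case~A.

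\textbf{Case A is incomplete as written.} Even granting the inductive claim $V^*_j\ge cE_jE^*_j$, this does not deliver $V^*_{|y|+1}\ge c'(E^*_{|y|+1})^2$, because $E_{|y|+1}$ can be much smaller than $E^*_{|y|+1}$ (on a path, $E_j\asymp h-j$ while $E^*_j\asymp (h-j)^2$). The line ``combining with $\sum_j E_jE^*_j\ge\tfrac12(\sum_jE_j)^2$'' does not help: that inequality concerns a sum over $j$, whereas the target $\Var_x\tau_y=V^*_{|y|+1}$ sits at a single index, and neither summing the inductive bounds nor taking their maximum recovers the missing factor. The paper avoids level-by-level bookkeeping entirely: after making $y$ absorbing, Fill's theorem writes $\tau_y$ as a sum of independent geometrics with parameters $1-\gamma_i$ (eigenvalues of the substochastic restriction), so $\Var_x\tau_y\ge\gamma_1/(1-\gamma_1)^2$; a Dirichlet-form bound on $\gamma_1$ then gives $(1-\gamma_1)^{-1}\gtrsim\mathbb{E}_x\tau_y$, and laziness ensures $\gamma_1\ge\tfrac12$.
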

In particular, this gives nonconcentration for return times when $x=y$. Some results for general graphs are studied in \cite{gurel2013nonconcentration}. Although the proof of Theorem \ref{theo:hitting} does not focus on optimizing the constant, it shows that we can take $C'=\frac{1}{484}$. This is not true for general graphs, or even general trees, as the family of trees constructed in \cite{peres2015total} shows. This also follows from Lemma 2.3 in \cite{norris2017surprise}, where the authors consider a slight modification of the previous family that, for some distinct vertices $x$ and $y$, satisfies
\[\Var_x(\tau_y)=O\left (\frac{\mathbb{E}_x(\tau_y)^2}{\log n}\right ).\]
Finally, in Section \ref{sectionrough} we study the stability of Theorem \ref{theo} under rough isometries. In general, cutoff is not preserved by rough isometries (see Theorem 2 in \cite{hermon2018on}). However, as Proposition \ref{prop} shows, it is preserved when we consider trees. This observation is also made in \cite[Remark 1.7]{hermon2018on}. Thus, the following result is an immediate consequence of Theorem \ref{theo} and Proposition \ref{prop}.

\begin{corollary}\label{cor:rough}
	Let $(T_n)_{n\in\mathbb{N}}$ be a family of spherically symmetric trees with bounded degree $\Delta$. For $n \in \mathbb{N}$, let $T'_n$ be a tree roughly isometric to $T_n$ with constants $\alpha$ and $\beta$ not depending on $n$. Then, the lazy simple random walk on $(T'_n)_{n\in\mathbb{N}}$ does not exhibit cutoff.
\end{corollary}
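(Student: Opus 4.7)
The proof is essentially a direct combination of Theorem \ref{theo} and Proposition \ref{prop}, so my plan is to chain them in sequence and take care of matching hypotheses.

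First, I would apply Theorem \ref{theo} to the sequence $(T_n)_{n\in\mathbb{N}}$ itself. Each $T_n$ is a finite spherically symmetric tree, so the universal bound $\trel(T_n) \geq C\,\tmix(T_n)$ holds for every $n$. Through criterion (\ref{criterion}) this rules out pre-cutoff for the lazy simple random walks on $(T_n)_{n\in\mathbb{N}}$, and in particular it rules out cutoff.

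Second, I would invoke Proposition \ref{prop}, whose content is that the cutoff phenomenon is preserved under rough isometries between (uniformly locally finite) trees with uniformly bounded rough-isometry constants. Under the hypotheses of the corollary the degrees of $T_n$ are bounded by $\Delta$, the rough-isometry constants $\alpha,\beta$ do not depend on $n$, and standard considerations then give a uniform degree bound on $T'_n$ as well. Consequently, if the family $(T'_n)_{n\in\mathbb{N}}$ exhibited cutoff, Proposition \ref{prop} would transfer this cutoff back to $(T_n)_{n\in\mathbb{N}}$, contradicting the first step; hence $(T'_n)_{n\in\mathbb{N}}$ cannot exhibit cutoff.

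There is no real obstacle here: the entire content of the corollary is packaged into Theorem \ref{theo} and Proposition \ref{prop}. The only thing requiring a sentence of justification is the verification that the degrees of $T'_n$ are uniformly bounded (so that Proposition \ref{prop} applies in both directions), and this is a routine consequence of the rough isometry together with the bound $\Delta$. I would record that verification explicitly and then state that the corollary follows by combining the two results.
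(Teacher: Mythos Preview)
Your approach matches the paper's exactly: the paper states that the corollary ``is an immediate consequence of Theorem \ref{theo} and Proposition \ref{prop}'' and gives no further argument, and your two-step chaining is precisely that.

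One caution on your closing remark: the assertion that a uniform degree bound on $T'_n$ is ``a routine consequence of the rough isometry together with the bound $\Delta$'' is not correct as stated. For instance, take $T_n$ to be the path on $n$ vertices (spherically symmetric, $\Delta=2$) and let $T'_n$ be the same path with $n$ additional leaves attached at the midpoint; the identity on the path gives a rough isometry with $\alpha=1$, $\beta=1$ for every $n$, yet $T'_n$ has a vertex of degree $n+2$. So bounded degree of $T'_n$ does \emph{not} follow automatically from the hypotheses as written. The paper is silent on this point and appears to treat bounded degree of $T'_n$ as an implicit hypothesis (it is needed for Proposition \ref{prop}); you should do the same rather than claim it can be derived.
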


\section{Preliminaries}\label{section2}
Given two probability measures $\mu$, $\nu$ on a set $V$, their \textit{total variation distance} is
\[\|\mu-\nu\|_{TV} = \max_{A\subseteq V}|\mu(A)-\nu(A)|.\]
Let $(X_t)$ be the \textit{lazy simple random walk} on a connected graph $G=(V,E)$, that is, every step with probability $\frac{1}{2}$ the chain either stays at the same vertex or goes to an adjacent vertex chosen uniformly at random. Given $a$, $b \in V$, we write $\tau_{a,b}=\inf\{t\geq \tau_b\colon X_t=a\}$, where $X_0=a$. The \textit{commute time} is $t_{a\leftrightarrow b}=\mathbb{E}_a(\tau_{a,b})$. The transition matrix of $(X_t)$ is denoted by $P$, and its stationary distribution is denoted by $\pi$. It is well known that $P$ is \textit{reversible}, that is, $\pi(x)P(x,y)=\pi(y)P(y,x)$ for all $x$, $y \in V$. The $\varepsilon$\textit{-mixing time} of $(X_t)$ is
\[ \tmix(\varepsilon)=\inf \left \{ t \geq 0 \colon \max_{x \in V} \|P^t(x,\cdot)-\pi\|_{TV} \leq \varepsilon\right \} \quad \forall \, \varepsilon \in (0,1).\]
The \textit{mixing time} of $(X_t)$ is $\tmix=\tmix(\frac{1}{4})$. It is well known that all eigenvalues of the transition matrix of a reversible lazy chain are positive. Let $\lambda_2$ be the second greatest eigenvalue of $P$. The \textit{spectral gap} of the chain is defined by $\gamma= 1-\lambda_2$. The \textit{relaxation time} is defined by $\trel=\frac{1}{\gamma}$. The following characterization of the spectral gap (see \cite[Remark 13.8]{levin2017markov}) will be useful to prove our main result.
\begin{equation}\label{gamma}
	\gamma= \min_{\substack{f \in \mathbb{R}^V \\ \operatorname{Var}_\pi(f)\neq 0}} \frac{\mathcal{E}(f)}{\operatorname{Var}_\pi(f)},
\end{equation}
where $\mathcal{E}(f)\coloneqq \frac{1}{2}\sum_{x,y \in V} |f(x)-f(y)|^2 \pi(x) P(x,y) $ is the \textit{Dirichlet form} of $f$.

Let $(G_n)_{n \in \mathbb{N}}$ be a sequence of graphs and let $(\tmix^n(\varepsilon))_{n \in \mathbb{N}}$ be the collection of $\varepsilon$-mixing times of random walks on $(G_n)_{n\in \mathbb{N}}$. We say that this family of random walks on $(G_n)_{n\in \mathbb{N}}$ exhibits \textit{cutoff} if for any $\varepsilon \in (0,1)$
\[ \lim_{n\to\infty} \frac{\tmix^n(\varepsilon)}{\tmix^n(1-\varepsilon)}=1.\]
The cutoff phenomenon was first verified in \cite{diaconis1981generating}, and was formally introduced in the seminal paper of Aldous and Diaconis \cite{aldous1986shuffling}. Ever since then, the cutoff phenomenon has been widely studied for many specific examples of Markov chains. As a weaker condition, the family of random walks on $(G_n)_{n \in \mathbb{N}}$ is said to exhibit \textit{pre-cutoff} if
\[ \sup_{0<\varepsilon<\frac{1}{2}} \limsup_{n\to\infty} \frac{\tmix^n(\varepsilon)}{\tmix^n(1-\varepsilon)} < \infty.\]
A necessary condition to have pre-cutoff is that for some $\varepsilon \in (0,1)$ (or equivalently, for all $\varepsilon \in (0,1)$)
\begin{equation}\label{criterion}
	\lim_{n\to \infty} \frac{\tmix^n(\varepsilon)}{\trel^n} = \infty,
\end{equation}
where $(\trel^n)_{n\in \mathbb{N}}$ denotes the collection of relaxation times of the family of random walks on $(G_n)_{n \in \mathbb{N}}$ (see \cite[Proposition 18.4]{levin2017markov}). Despite Aldous' finding that in general Condition \ref{criterion} is not sufficient to have a cutoff (see Chapter 18 of \cite{levin2017markov}), it is believed to be sufficient for many families of Markov chains, such as lazy simple random walks on trees (see \cite{basu2017characterization}).

Recall that a \textit{tree} is a connected graph with no cycles. A \textit{rooted tree} has a distinguished vertex $o$, called the \textit{root}. The \textit{depth} of a vertex $v$ is its graph distance to the root. The \textit{height} of a tree is the maximum depth. A \textit{level} of the tree consists of all vertices at the same depth. A \textit{leaf} is a vertex of degree one and a \textit{branching point} is a vertex of degree at least $3$. A rooted tree $T$ is \textit{spherically symmetric} if all vertices at the same level have the same degree. We write $\degree_k$ for the degree of the vertices at level $k$.

\section{No cutoff in Spherically symmetric trees}

In this section, we answer Question 6.1 in \cite{gantert2020cutoff} by showing that the family of the (continuous-time) simple random walks on a sequence of finite spherically symmetric trees $(T_n)_{n \in \mathbb{N}}$ does not exhibit cutoff. Recall that \cite[Theorem 1.1]{chen2013comparison} allows us to restrict our study to the (discrete-time) lazy simple random walk. In view of (\ref{criterion}), the desired result follows from the bound on the ratio of the mixing time and the relaxation time that Theorem \ref{theo} provides.

Recall that there exists a universal constant $C_1>0$ for which, for any vertex $y$ of a tree $T$, the mixing time for the simple random walk on $T$ is bounded as follows:
\[\tmix \leq C_1 \max_{x \in V} \mathbb{E}_x (\tau_{y}).\]
See \cite[Lemma 9.3]{peres2015mixing}, where it is proved for central nodes, and \cite[Proposition 3.1]{gantert2020cutoff} for a reference of the general result. As the following lemma shows, when the tree is spherically symmetric, for a specific choice of the vertex $y$ we can take $C_1=12$.

\begin{lemma}\label{lemmacoupling}
	Let $T$ be a finite spherically symmetric tree of height $h$ and let $v$ be a vertex at level $h$. If $\deg_0\geq 2$ or $T$ has no branching points, let $v^*$ be the root of $T$. Otherwise, let $v^*$ be the closest branching point to the root. Then, the lazy simple random walk on $T$ satisfies
	\begin{equation}\label{boundcoupling}
		\tmix \leq 4(\mathbb{E}_o(\tau_{v^*})+2\mathbb{E}_v(\tau_{v^*})).
	\end{equation}
\end{lemma}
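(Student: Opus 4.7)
The plan is to exhibit a Markovian coupling $(X_t,Y_t)_{t\geq 0}$ of two lazy simple random walks on $T$ whose coupling time $\tau_{\mathrm{couple}}=\inf\{t: X_t=Y_t\}$ satisfies
\[\max_{x,y\in T}\mathbb{E}_{x,y}(\tau_{\mathrm{couple}})\;\leq\; \mathbb{E}_o(\tau_{v^*})+2\mathbb{E}_v(\tau_{v^*}),\]
and then invoke the standard bound $\tmix\leq 4\max_{x,y}\mathbb{E}_{x,y}(\tau_{\mathrm{couple}})$ that follows from the coupling inequality $\|P^t(x,\cdot)-P^t(y,\cdot)\|_{TV}\leq\mathbb{P}_{x,y}(\tau_{\mathrm{couple}}>t)$ together with Markov's inequality.

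The coupling will be built around $v^*$, exploiting the spherical symmetry of $T$. Below $v^*$, the tree is itself spherically symmetric about $v^*$; above $v^*$, it reduces to the trunk $o=u_0,u_1,\ldots,u_{d^*}=v^*$, where $d^*$ denotes the depth of $v^*$ (the trunk is empty if $v^*=o$). Writing $\ell(u)$ for the graph distance from $u$ to $v^*$, whenever $\ell(X_t)=\ell(Y_t)$ with both walks below $v^*$, I would couple the two transitions so that the walks are simultaneously lazy or active, both ascend to their parent with the same conditional probability $1/\deg_{d^*+\ell}$, and the descending moves are matched by a bijection between the children; spherical symmetry makes all three choices well-defined. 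This \emph{level coupling} preserves $\ell(X_t)=\ell(Y_t)$ until the next joint visit to $v^*$, where the walks coalesce.

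For arbitrary initial positions I would proceed in two phases. Phase~1 brings the walks to a common level, and Phase~2 then runs the level coupling. The key estimate is monotonicity of $u\mapsto \mathbb{E}_u(\tau_{v^*})$ in $\ell(u)$: along the trunk this is a birth--death chain on a path, and along any root-to-leaf path below $v^*$ it follows from spherical symmetry. Consequently $\max_{u\in T}\mathbb{E}_u(\tau_{v^*})$ is attained at $u=o$ or $u=v$, giving the Phase~1 bound $\max_u\mathbb{E}_u(\tau_{v^*})\leq \mathbb{E}_o(\tau_{v^*})+\mathbb{E}_v(\tau_{v^*})$. Phase~2 contributes at most one additional round trip from a deepest leaf to $v^*$, namely $\mathbb{E}_v(\tau_{v^*})$ in the worst case. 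Summing yields $\mathbb{E}(\tau_{\mathrm{couple}})\leq \mathbb{E}_o(\tau_{v^*})+2\mathbb{E}_v(\tau_{v^*})$, from which \eqref{boundcoupling} follows.

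The main obstacle will be the asymmetric case where one walk lies on the trunk above $v^*$ and the other lies deep in the subtree below: no natural level coupling is available in this regime until both walks have visited $v^*$, so the waiting/synchronization scheme in Phase~1 must be designed carefully to ensure that the walk that first reaches $v^*$ does not wander uncontrollably while the other one catches up. The asymmetric decomposition $\mathbb{E}_o(\tau_{v^*})+2\mathbb{E}_v(\tau_{v^*})$ (rather than a symmetric $2\mathbb{E}_o+2\mathbb{E}_v$) suggests that the authors allocate one trunk descent plus two subtree excursions, and the delicate step is to realize this allocation with an explicit Markovian coupling.
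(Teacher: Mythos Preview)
Your overall strategy matches the paper's: couple two lazy walks so that once they share a level they remain level-synchronized, and then wait for a joint visit to $v^*$. The obstacle you flag in Phase~1 is exactly the crux, and the paper resolves it with a device simpler than you seem to anticipate: a \emph{one-at-a-time} coupling. At each step a fair coin is flipped; the chosen walk moves to a uniformly random neighbor (non-lazily) while the other walk stays put. Each marginal is then precisely the lazy simple random walk, and since exactly one depth changes by $\pm 1$ per step, the signed depth difference $d(X_t)-d(Y_t)$ performs a nearest-neighbor walk on $\mathbb{Z}$ that cannot skip over $0$. Hence the two walks share a level no later than the first time either walk reaches level $h$; in the worst case this is the hitting time of level $h$ for a lazy simple random walk started at $o$, which is at most $\mathbb{E}_o(\tau_{v^*})+\mathbb{E}_v(\tau_{v^*})$ (the second term dominates $\mathbb{E}_{v^*}(\tau_{\text{level }h})$ because below $v^*$ the depth process drifts towards the leaves). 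Phase~2 then contributes at most $\mathbb{E}_v(\tau_{v^*})$, as you say.

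Your Phase~1 accounting via $\max_u\mathbb{E}_u(\tau_{v^*})$ does not, as written, correspond to any Markovian coupling: if one walk is to hit $v^*$ while remaining a lazy SRW, the other walk cannot simply be frozen, and if both run freely the first one to reach $v^*$ will wander off before the second arrives---precisely the difficulty you identify at the end. The one-at-a-time trick sidesteps this by replacing ``hit $v^*$'' with ``reach level $h$'' as the Phase~1 target: level $h$ is an extreme of the depth range, so an intermediate-value argument forces a depth crossing, whereas $v^*$ sits in the interior and affords no such argument.
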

\begin{proof}
	Consider the following coupling $(X_t,Y_t)$ of two lazy simple random walks, started from states $x$ and $y$ on the tree. At each move, toss a coin to decide which of the two chains moves. The chosen chain will move to one neighbor chosen uniformly at random, while the other one stays at the same position. Run these two chains according to this rule until they are at the same level of the tree. After that, the chain $(X_t)$ will evolve as the lazy simple random walk, and the chain $(Y_t)$ will move closer to or further to the root if and only if $(X_t)$ moves closer to or further to the root. Once they are at the same vertex, $(Y_t)$ mimics $(X_t)$. Let $\tau_{\operatorname{couple}}= \inf \{t \geq 0 \colon X_s=Y_s \mbox{ for all } s\geq t\}$. Then, Corollary 5.5 in \cite{levin2017markov} gives
	\[ \tmix\leq 4 \max_{x,y\in V} \mathbb{E}(\tau_{\operatorname{couple}}).\]
	Finally, observe that no matter what the initial states $x$ and $y$ are, the expected time until both chains are at level $h$ is bounded by the expected time that the lazy simple random walk needs to go from $o$ to level $h$, which is bounded by $\mathbb{E}_o(\tau_{v^*})+\mathbb{E}_v(\tau_{v^*})$. Then, by the time the chains go back to the vertex $v^*$ they must be equal, so we have 
	\[\max_{x,y\in V} \mathbb{E}(\tau_{\operatorname{couple}})\leq\mathbb{E}_o(\tau_{v^*})+2\mathbb{E}_v(\tau_{v^*}).\qedhere\]
\end{proof}

The next lemma gives a lower bound for the relaxation time of reversible Markov chains in terms of hitting times of sets when the chain starts from stationary. It follows from Lemma 10 in \cite{aldous1992inequalities}, but we give a direct proof for completeness.

\begin{lemma}\label{aldous} 
	Let $(X_t)$ be a reversible Markov chain on a state space $V$ with stationary distribution $\pi$. For any subset $A\subseteq V$ with $0<\pi(A)<1$,
	\[ \trel \geq \frac{\pi(A)}{\pi(A^\mathsf{c})}\mathbb{E}_\pi(\tau_A).\]
\end{lemma}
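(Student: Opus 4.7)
The plan is to apply the variational characterization (\ref{gamma}) of the spectral gap with the hitting-time test function $g(x) := \mathbb{E}_x(\tau_A)$. Observe that $g\equiv 0$ on $A$, and one-step conditioning gives $Pg(x)=g(x)-1$ for $x\in A^c$, so $(I-P)g=\mathbf{1}_{A^c}$.

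First, I would evaluate the Dirichlet form. By reversibility, $\mathcal{E}(g)=\langle (I-P)g,g\rangle_\pi$, where $\langle h,k\rangle_\pi=\sum_x h(x)k(x)\pi(x)$; and since $g$ vanishes on $A$, this reduces to $\sum_{x\in A^c}g(x)\pi(x)=\mathbb{E}_\pi(\tau_A)$. Writing $m:=\mathbb{E}_\pi(\tau_A)$, we thus have $\mathcal{E}(g)=m$.

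Next, I would bound $\Var_\pi(g)$ from below. Using $g|_A\equiv 0$, split
\[ \Var_\pi(g) = m^2\pi(A) + \sum_{x\in A^c}(g(x)-m)^2\pi(x). \]
The conditional $\pi$-mean of $g$ on $A^c$ equals $m/\pi(A^c)$, so Jensen's inequality applied to the second sum yields
\[ \sum_{x\in A^c}(g(x)-m)^2\pi(x) \geq \pi(A^c)\left(\frac{m}{\pi(A^c)}-m\right)^{\!2} = \frac{m^2\pi(A)^2}{\pi(A^c)}. \]
Adding the two contributions gives $\Var_\pi(g)\geq m^2\pi(A)/\pi(A^c)$.

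Combining these via (\ref{gamma}) produces $\gamma \leq \mathcal{E}(g)/\Var_\pi(g) \leq \pi(A^c)/(\pi(A)\,m)$, which rearranges to the desired $\trel\geq \pi(A)\mathbb{E}_\pi(\tau_A)/\pi(A^c)$. I do not anticipate any real obstacle; the only subtle point is the Jensen step on $A^c$, since the crude bound $\Var_\pi(g)\geq m^2\pi(A)$ obtained only from the mass at $g=0$ would yield the strictly weaker $\trel\geq \pi(A)\mathbb{E}_\pi(\tau_A)$, losing the advertised factor $1/\pi(A^c)$.
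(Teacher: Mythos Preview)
Your proof is correct and follows essentially the same approach as the paper: the same test function $g(x)=\mathbb{E}_x(\tau_A)$, the same Dirichlet form computation via $\mathcal{E}(g)=\langle (I-P)g,g\rangle_\pi$, and the same Jensen inequality on the conditional measure on $A^{\mathsf c}$ to bound the variance. The only cosmetic difference is that the paper bounds $\mathbb{E}_\pi(g^2)$ first and then subtracts $m^2$, whereas you split $\Var_\pi(g)$ into the $A$ and $A^{\mathsf c}$ contributions directly; the two computations are algebraically equivalent (note, incidentally, that $(I-P)g=\mathbf{1}_{A^{\mathsf c}}$ holds only on $A^{\mathsf c}$, but this is harmless since $g$ vanishes on $A$).
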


\begin{proof}
	Define $f\colon V \longrightarrow \mathbb{R}$ by $f(x)=\mathbb{E}_x(\tau_A)$. By conditioning of the first step, for any $x \notin A$ we have
	\[ f(x)=1+\sum_{y \in V} P(x,y)f(y)= 1+Pf(x).\]
	A well-known fact, that can be proved with a simple computation, is that  the Dirichlet form of $f$ satisfies $\mathcal{E}(f)=\langle (I-P)f,f\rangle_\pi$, where $I$ and $P$ denote the identity and transition matrix, respectively. Since $f$ vanishes on $A$, we have that
	\begin{equation}\label{eq:dirichlet}
		\mathcal{E}(f)=\langle f-Pf,f\rangle_\pi= \langle 1,f\rangle_\pi=\sum_{x \notin A} f(x)\pi(x)= \mathbb{E}_\pi(f)=\mathbb{E}_\pi(\tau_A).
	\end{equation}
	Write $\mu$ for $\pi$ conditioned on $A^\mathsf{c}$ and observe that
	\[ \mathbb{E}_\pi(f^2)=\pi(A^\mathsf{c}) \mathbb{E}_\mu(f^2)\geq\pi(A^\mathsf{c}) \mathbb{E}_\mu(f)^2=\frac{1}{\pi(A^\mathsf{c})}\mathbb{E}_\pi(f)^2. \]
	Thus, 
	\[\Var_\pi(f)= \mathbb{E}_\pi(f^2)-\mathbb{E}_\pi(f)^2\geq\left(\frac{1}{\pi(A^\mathsf{c})}-1\right )\mathbb{E}_\pi(f)^2= \frac{\pi(A)}{\pi(A^\mathsf{c})}\mathbb{E}_\pi(f)^2=\frac{\pi(A)}{\pi(A^\mathsf{c})}\mathbb{E}_\pi(\tau_A)^2.\]
	The result now follows from the characterization of the spectral gap (\ref{gamma}).
\end{proof}

The following simple lemma will be useful for future estimations.

\begin{lemma}\label{lemmaconcave}
	Given $h\in \mathbb{N}$, let $f\colon\{0,\ldots,h+1\}\longrightarrow \mathbb{R}$ be a concave increasing function satisfying $f(0)=0$, and let $w$ be a probability measure on $\{0,\ldots,h+1\}$ such that $w(0)\leq \ldots \leq w(h)$ and $w(0)\leq w(h+1)$. Then,
	\[ \mathbb{E}_w(f^2) \geq \frac{1}{7}f(h+1)^2.\]
\end{lemma}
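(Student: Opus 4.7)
Set $M := f(h+1)$. The plan combines concavity of $f$ (which forces $f$ to grow at least linearly) with a second-moment estimate for $w$ that exploits both the monotonicity on $\{0,\dots,h\}$ and the extra constraint $w(0)\le w(h+1)$. Since $f$ is concave with $f(0)=0$, one has $f(k)\ge \tfrac{k}{h+1}M$ for each $k\in\{0,\dots,h+1\}$; consequently,
\[
\mathbb{E}_w(f^2)\;\ge\;\frac{M^2}{(h+1)^2}\sum_{k=0}^{h+1} k^2 w(k),
\]
so the task reduces to showing $\sum_{k=0}^{h+1} k^2 w(k)\ge (h+1)^2/7$.

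To bound this sum I would decompose $w(k)=w(0)+v(k)$ with $v(k):=w(k)-w(0)$. Both hypotheses are needed to guarantee $v(k)\ge 0$: monotonicity on $\{0,\dots,h\}$ handles $k\le h$, while $w(0)\le w(h+1)$ handles $k=h+1$. Splitting
\[
\sum_{k=0}^{h+1} k^2 w(k)\;=\;w(0)\sum_{k=1}^{h+1} k^2\;+\;\sum_{k=1}^{h+1} k^2 v(k),
\]
I would apply Chebyshev's sum inequality to the similarly-ordered sequences $(k^2)_{k=1}^{h}$ and $(v(k))_{k=1}^{h}$, and then absorb the $k=h+1$ contribution using $(h+1)^2\ge \tfrac{(h+1)(2h+1)}{6}$, obtaining
\[
\sum_{k=1}^{h+1} k^2 v(k)\;\ge\;\frac{(h+1)(2h+1)}{6}\sum_{k=1}^{h+1} v(k)\;=\;\frac{(h+1)(2h+1)}{6}\bigl(1-(h+2)w(0)\bigr).
\]
After combining with the first piece, the net coefficient of $w(0)$ turns out to be non-negative, so discarding it leaves $\sum_{k=0}^{h+1} k^2 w(k)\ge (h+1)(2h+1)/6$, which is $\ge (h+1)^2/4$ whenever $h\ge 1$. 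The boundary case $h=0$ is immediate: $w(0)\le w(1)$ and $w(0)+w(1)=1$ force $w(1)\ge 1/2$, hence $\sum k^2 w(k)=w(1)\ge 1/2$.

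The main obstacle is identifying a decomposition that actually forces the extra constraint $w(0)\le w(h+1)$ to participate. Applying Chebyshev's inequality directly to $(k^2)_{k=0}^{h}$ and $(w(k))_{k=0}^{h}$ produces only $\sum k^2 w(k)\ge \tfrac{h(2h+1)}{6}(1-w(h+1))+(h+1)^2 w(h+1)$, which at $h=1$, $w(h+1)=0$ yields just $1/2<4/7$, short of the target. Peeling off the common baseline $w(0)$ before invoking Chebyshev is the trick that lets both monotonicity constraints contribute simultaneously and produces the uniform factor $1/7$, with comfortable margin.
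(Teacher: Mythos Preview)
Your argument is correct and in fact yields the stronger bound $\mathbb{E}_w(f^2)\ge \tfrac14 f(h+1)^2$ for every $h\ge 1$. The route, however, is genuinely different from the paper's. The paper does not linearize $f$ globally; instead it conditions $w$ on the two pieces $\{0,h+1\}$ and $\{1,\dots,h\}$ and observes $\mathbb{E}_w(f^2)\ge \min\{\mathbb{E}_{w_1}(f^2),\mathbb{E}_{w_2}(f^2)\}$. The constraint $w(0)\le w(h+1)$ is spent entirely on the first piece (giving $\mathbb{E}_{w_1}(f^2)\ge \tfrac12 f(h+1)^2$), while the monotonicity on $\{1,\dots,h\}$ is spent on the second (linearize $f$ only up to level $h$, then compare $w_2$ to uniform, yielding $\ge \tfrac13 f(h)^2\ge \tfrac{4}{27}f(h+1)^2$); the case $h=1$ is handled separately. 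Your approach is cleaner in that it needs no case split and produces a sharper constant: linearizing $f$ once and peeling off the baseline $w(0)$ lets the two hypotheses act simultaneously through a single application of Chebyshev's sum inequality. The paper's decomposition, on the other hand, makes transparent which hypothesis controls which part of the mass, and avoids any algebra with the coefficient of $w(0)$.
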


\begin{proof}
	First, if $h=1$ we can use the concavity of $f$ and $w(0)\leq w(2)$ to get
	\[\mathbb{E}_w(f^2)= w(1)f(1)^2 + w(2)f(2)^2\geq \left (\frac{w(1)}{4}+w(2)\right )f(2)^2 \geq \frac{f(2)^2}{4}.\]
	For $h\geq 2$, write $w_1$ and $w_2$ for $w$ conditioned on $\{0,h+1\}$ and $\{1,\ldots,h\}$, respectively. Observe that $\mathbb{E}_w(f^2)\geq\min\{ \mathbb{E}_{w_1}(f^2), \mathbb{E}_{w_2}(f^2)\}$. On the one hand, we have
	\[ \mathbb{E}_{w_1}(f^2) = \frac{w(h+1)f(h+1)^2}{w(0)+w(h+1)} \geq \frac{f(h+1)^2}{2}.\]
	On the other hand, define $\tilde{f}\colon\{1,\ldots,h\}\longrightarrow \mathbb{R}$ by $\tilde{f}(j)=\frac{j}{h}f(h)$ for every $j\in \{1,\ldots,h\}$. Using that $f$ is concave we get $f\geq \tilde{f}$. Moreover, since $\tilde{f}$ is increasing and $w_2(1)\leq w_2(2)\leq \ldots \leq w_2(h)$ we have
	\begin{equation}\label{eqconcav} \mathbb{E}_{w_2}(f^2)\geq \mathbb{E}_{w_2}(\tilde{f}^2)\geq \frac{1}{h}\sum_{j=1}^h \tilde{f}(j)^2 =\frac{f(h)^2}{h^3} \sum_{j=1}^h j^2\geq \frac{f(h)^2}{3}.
	\end{equation}
	Since $f$ is concave we also have $f(h)\geq \frac{h}{h+1}f(h+1)\geq \frac{2}{3}f(h+1)$, so the result follows from (\ref{eqconcav}).\qedhere
\end{proof}

\begin{proof}[Proof of Theorem \ref{theo}]
	Let $T=(V,E)$ be a finite spherically symmetric tree of height $h$. If $\degree_0\geq 2$, then set $\ell=0$. Otherwise, let $\ell$ be the level at which we find the closest branching point to the root. If $\degree_0=1$ and there are no branching points (and so the graph is a segment), set $\ell=h$.
	
	Let $S=\{o=v_0,v_1,\ldots,v_\ell\}$ be the set of vertices that belong to the (possible) initial segment of the graph. Notice that if $\degree_0\geq 2$, then $S=\{o\}$. In the case when the graph is not a segment, let $T(1), \ldots, T(r)$ be the connected components that we obtain after removing the vertices of $S$. Let $A$ be the union of the first $\lfloor\frac{r}{2}\rfloor$ connected components and let $B$ be the union of the last $\lfloor\frac{r}{2}\rfloor$ connected components. If $r$ is odd, set $C=T(\lfloor\frac{r}{2}\rfloor+1)$. Otherwise, set $C=\emptyset$ (see Figure \ref{figure1}).
	
	\begin{figure}[t]
		\centering
		\scalebox{0.45}{
			\begin{forest}
				for tree={math content,
					grow=south,
					circle, draw, minimum size=2ex, inner sep=4.5pt,
					s sep=10mm, l sep=3mm
				}
				[o,fill={white}, inner sep=6.5,font=\huge[v_1,fill={white},inner sep=5,font=\LARGE[\vdots,align=center,inner sep=4,font=\Large[v_\ell,fill={white},inner sep=6,font=\LARGE[,fill={white} [,fill={white}[,fill={white}[,fill={white}[,fill={white}[,fill={white}[,fill={white}]]][,fill={white}[,fill={white}[,fill={white}]]]]]] ][,fill={white} [,fill={white}[,fill={white}[,fill={white}[,fill={white}[,fill={white}[,fill={white}]]][,fill={white}[,fill={white}[,fill={white}]]]]]] ]
				[,fill={gray} [,fill={gray}[,fill={gray}[,fill={gray}[,fill={gray}[,fill={gray}[,fill={gray}]]][,fill={gray}[,fill={gray}[,fill={gray}]]]]]] ] [,fill={black} [,fill={black}[,fill={black}[,fill={black}[,fill={black}[,fill={black}[,fill={black}]]][,fill={black}[,fill={black}[,fill={black}]]]]]] ][,fill={black} [,fill={black}[,fill={black}[,fill={black}[,fill={black}[,fill={black}[,fill={black}]]][,fill={black}[,fill={black}[,fill={black}]]]]]] ]]]]]]]
		\end{forest}}
		\caption{Example of a spherically symmetric tree. Labeled vertices correspond to the set $S$. White, black, and gray vertices correspond to the sets $A$, $B$, and $C$, respectively.}
		\label{figure1}
		
	\end{figure}
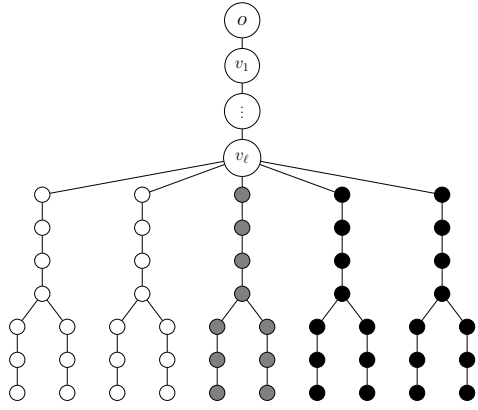
	
	In view of Lemma \ref{lemmacoupling}, we distinguish two cases. Assume that $\frac{5}{2}\mathbb{E}_{o} (\tau_{v_\ell})\leq\mathbb{E}_{v} (\tau_{v_\ell})$, where $v$ is a leaf of $T$ at level $h$ (and so the graph is not a segment). Take $D=A\cup C \cup S$. Then, Lemma \ref{aldous} gives 
	\begin{equation}\label{eq:case1aldous}
		\trel \geq \frac{\pi(D)}{\pi(D^\mathsf{c})}\mathbb{E}_\pi(\tau_D)= \frac{\pi(D)}{\pi(B)}\mathbb{E}_\pi(\tau_D).
	\end{equation}
	For every $j=1,\ldots,h$, let $V_j$ be the set of vertices at level $j$. Define $g_1\colon \{\ell,\ldots,h\}\longrightarrow \mathbb{R}$ by $g_1(j)=\mathbb{E}_{v_j}(\tau_D)$, where $v_j \in V_j$. Observe that
	\[ \mathbb{E}_\pi(\tau_D)=\sum_{x\in B} \mathbb{E}_x(\tau_D) \pi(x)= \sum_{j=\ell+1}^{h} g_1(j)\pi(V_j\cap B)= \pi(B) \sum_{j=\ell+1}^h g_1(j)\frac{\pi(V_j\cap B)}{\pi(B)}.\]
	Applying Lemma \ref{lemmaconcave} with $f(j)=\sqrt{g_1(j+\ell)}$ and $w(j)=\frac{\pi(V_{j+\ell}\cap B)}{\pi(B)}$ for $j=0,\ldots,h-\ell$, gives
	\[\mathbb{E}_\pi(\tau_D)=\pi(B) \mathbb{E}_\omega(f^2)\geq \frac{\pi(B)}{7}f(h-\ell)^2= \frac{\pi(B)}{7} \mathbb{E}_v(\tau_{v_\ell}),\]
	where $v$ is a leaf at level $h$. Hence, inequality (\ref{eq:case1aldous}) and Lemma \ref{lemmacoupling} yield
	\begin{equation}\label{eq:aldous2}
		\trel \geq \frac{\pi(D)}{7} \mathbb{E}_v(\tau_{v_\ell})\geq \frac{1}{14}\cdot \frac{1}{4(\frac{2}{5}+2)}\tmix\geq\frac{1}{135}\tmix.
	\end{equation}
	Next, assume that $ \mathbb{E}_v(\tau_{v_\ell})\leq\frac{5}{2}\mathbb{E}_o(\tau_{v_\ell})$, where $v$ is a leaf of $T$ at level $h$. In particular, $\degree_0=1$. We will use (\ref{gamma}) again to bound the relaxation time. Define $g_2 \colon V \longrightarrow \mathbb{R}$ by
	\[ g_2(x)=\left\{
	\begin{array}{ll}
		i & \mbox{ if } x=v_i \mbox{ for } i \in \{0,\ldots,\ell\};\\
		\ell & \mbox{ otherwise.}\\
	\end{array}
	\right.
	\]
	On the one hand, we can compute the Dirichlet form of $g_2$ as follows:
	\begin{align*}
		\mathcal{E}(g_2)&=\frac{1}{2}\sum_{x,y \in V}|g_2(x)-g_2(y)|^2\pi(x)P(x,y)=\frac{1}{2} \sum_{k=0}^{\ell} \pi(v_k) \sum_{\substack{i=0 \\ i\neq k}}^\ell P(v_k,v_i)\\
		&=\frac{1}{4} \sum_{k=0}^{\ell-1} \pi(v_k) + \frac{1}{2}\frac{\pi(v_\ell)}{2\degree_\ell}=\frac{1}{4} \frac{\ell}{|E|}.
	\end{align*}
	On the other hand, the variance of $g$ can be estimated as
	\begin{align*}
		\operatorname{Var}_\pi(g_2)&= \sum_{x \in V} |g_2(x)-\mathbb{E}_\pi(g_2)|^2 \pi(x)\geq \sum_{x \in S} |g_2(x)-\mathbb{E}_\pi(g_2)|^2 \pi(x)\\
		&\geq\frac{1}{|E|}\left (\frac{1}{2}|0- \mathbb{E}_\pi(g_2)|^2 + \frac{1}{2} |\ell-\mathbb{E}_\pi(g_2)|^2 + \sum_{k=1}^{\ell-1} |k-\mathbb{E}_\pi(g_2)|^2 \right ),
	\end{align*}
	where $S$ denotes the initial segment of the tree (see Figure \ref{figure1}). Now, if we study the above expression as a function of $\mathbb{E}_\pi(g_2)$, it is easy to see that the minimum is attained when $\mathbb{E}_\pi(g_2)=\frac{\ell}{2}$, and so we have that
	\begin{align*} \operatorname{Var}_\pi(g_2)&\geq \frac{1}{|E|}\sum_{k=1}^{\ell} \left |k-\frac{\ell}{2}\right |^2
		=\frac{1}{|E|} \frac{\ell^3+2\ell}{12}\geq \frac{1}{12} \frac{\ell^3}{|E|}.
	\end{align*}
	The expected time for the lazy simple random walk to go from $o$ to $v_\ell$ is $2\ell^2$ (see Section 10.4 in \cite{levin2017markov}), so in view of (\ref{gamma}) and Lemma \ref{lemmacoupling}, we conclude that
	\begin{equation}\label{final2} \trel\geq \frac{\operatorname{Var}_\pi(g_2)}{\mathcal{E}(g_2)}\geq\frac{4}{12} \ell^2 \geq \frac{4}{12} \cdot \frac{1}{8(1+5)}\tmix= \frac{1}{144}\tmix.
	\end{equation}
	This proves the first part of the statement. Now, the second part follows from (\ref{criterion}).
\end{proof}

\section{Hitting times}\label{section4}
In this section, we present the proof of Theorem \ref{theo:hitting}, which will be broken into several lemmata. Given $x$ and $y$ vertices of $T$, first we study the case when $y$ is an ancestor of $x$. By a Markov chain on an (undirected) graph $G$ with transition matrix $P$, we mean that transition probabilities satisfy $P(x,y)>0$ if and only if $\{x,y\}$ is an edge of $G$. Given vertices $x\neq y \in G$, let $G_{x,y}$ denote the union of the connected components of $G\setminus\{x\}$ not containing $y$ (see Figure \ref{figure2} for an example when $G$ is a tree). We start with the following general lemma.

\begin{lemma}\label{lemma RS}
	Let $(X_t)$ be an irreducible Markov chain on a graph $G$ starting at $x\in G$. Given $y \in G\setminus\{x\}$, we have that $\tau_{y}=R+S$, where $R$ is the time needed for the chain restricted to $G\setminus G_{x,y}$ to go from $x$ to $y$, and $S$ is a random variable positively correlated to $R$ satisfying
	\begin{equation}\label{eq:lemmaRS} \Var(S)\geq
		\mathbb{E}(S)^2.
	\end{equation}
\end{lemma}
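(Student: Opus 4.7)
The plan is to decompose the trajectory of $(X_t)$ up to time $\tau_y$ by tracking its successive excursions into $G_{x,y}$. Let $N$ denote the number of such excursions before $\tau_y$, and let $L_1,\ldots,L_N$ be their lengths, each measured from the step leaving $x$ to the next step returning to $x$. By the strong Markov property applied at successive visits to $x$, the $L_i$ are i.i.d.\ and independent of $N$. Setting $S=\sum_{i=1}^N L_i$ and $R=\tau_y-S$, the erased trajectory obtained by deleting these excursions behaves from $x$ as the original chain conditioned not to enter $G_{x,y}$, so $R$ has the law of the hitting time of $y$ for the chain restricted to $G\setminus G_{x,y}$.

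The key step is to identify the distribution of $N$. Classify each visit of $X$ to $x$ by its next transition: with probability $p_1$ it begins an excursion into $G_{x,y}$ (type 1); with probability $p_2$ it begins a $y$-side excursion that returns to $x$ before hitting $y$ (type 2); and with probability $p_3$ it begins a $y$-side excursion that hits $y$ first (type 3). These outcomes are i.i.d.\ across visits to $x$, and the trajectory terminates at the first type-3 outcome. Since $N$ counts only the type-1 outcomes, conditioning on the outcome being type 1 or type 3 and ignoring type-2 trials yields an i.i.d.\ Bernoulli sequence with success probability $p_3/(p_1+p_3)$; hence $N$ is the number of failures before the first success in this conditional sequence, i.e.\ geometric, and
\[
\Var(N) = \mathbb{E}(N) + \mathbb{E}(N)^2 \geq \mathbb{E}(N)^2.
\]
Combining this with the standard variance identity for a random sum,
\[
\Var(S) = \mathbb{E}(N)\Var(L_1) + \Var(N)\mathbb{E}(L_1)^2 \geq \Var(N)\mathbb{E}(L_1)^2 \geq \mathbb{E}(N)^2\mathbb{E}(L_1)^2 = \mathbb{E}(S)^2,
\]
which is (\ref{eq:lemmaRS}).

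For the positive correlation of $R$ and $S$, I would condition on $V_R$, the number of visits of the restricted chain to $x$ before $\tau_y$. Given $V_R$, the lengths of the $y$-side excursions (which determine $R$) are independent of the geometric counts of type-1 trials per visit to $x$ and of the lengths $L_i$ (which together determine $S$), so $\mathrm{Cov}(R, S \mid V_R) = 0$ and hence $\mathrm{Cov}(R, S) = \mathrm{Cov}\bigl(\mathbb{E}(R \mid V_R),\, \mathbb{E}(S \mid V_R)\bigr)$. Both conditional expectations are non-decreasing linear functions of $V_R$: writing $M^{\mathrm{ret}}$ and $M^{\mathrm{hit}}$ for the lengths of $y$-side excursions conditioned on returning to $x$ and on hitting $y$ respectively,
\[
\mathbb{E}(R \mid V_R) = (V_R-1)\,\mathbb{E}(M^{\mathrm{ret}}) + \mathbb{E}(M^{\mathrm{hit}}), \qquad \mathbb{E}(S \mid V_R) = V_R \cdot \frac{p_1}{1-p_1} \cdot \mathbb{E}(L_1),
\]
so $\mathrm{Cov}(R, S)$ is a non-negative multiple of $\Var(V_R)\geq 0$.

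The main obstacle is the clean identification of the law of $N$ through the type-2 collapse above; once that is in hand, both conclusions of the lemma follow from elementary identities for compound distributions and conditional expectations.
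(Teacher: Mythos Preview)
Your proof is correct and follows essentially the same excursion decomposition as the paper: both identify $N$ as a (shifted) geometric via the thinning argument you call the ``type-2 collapse,'' and both deduce $\Var(S)\geq \mathbb{E}(S)^2$ from the compound-sum variance formula. The only noteworthy difference is the conditioning variable used for positive correlation: the paper conditions on $N$ and invokes Chebyshev's association inequality for the increasing functions $\mathbb{E}(R\mid N)$ and $\mathbb{E}(S\mid N)$, whereas you condition on $V_R$, which has the advantage that both conditional expectations are explicitly linear in $V_R$, so the covariance reduces transparently to a nonnegative multiple of $\Var(V_R)$.
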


\begin{proof}
	If $G_{x,y}=\emptyset$, we can take $S=0$ and $R=\tau_y$. Otherwise, let $N$ be the number of times that the chain visits the set $G_{x,y}$ and comes back to $x$ before hitting $y$. Note that $N+1$ follows a geometric distribution with parameter $p=\probb{x}{\tau_y<\tau_{G_{x,y}}}$. Consider random variables $\tau_1,\ldots,\tau_N$ representing the length of those excursions. Then $S=\sum_{j=1}^N \tau_j$ gives the total time that the chain spends on $G_{x,y}$ before hitting $y$. Thus, $R=\tau_y - S$ gives the time that the chain spends on $G\setminus G_{x,y}$ before hitting $y$. It is easy to see that $\mathbb{E}(R|N)$ and $\mathbb{E}(S|N)$ are independent random variables that increase as $N$ increases, which implies that $R$ and $S$ are positively correlated. Indeed, we have
	\[ \mathbb{E}(RS)=\mathbb{E}(\mathbb{E}(RS|N))=\mathbb{E}(\mathbb{E}(R|N)\mathbb{E}(S|N))\geq \mathbb{E}(\mathbb{E}(R|N)) \mathbb{E}(\mathbb{E}(S|N)) = \mathbb{E}(R)\mathbb{E}(S),\]
	where the second equality comes from the independence of $\mathbb{E}(R|N)$ and $\mathbb{E}(S|N)$, and the inequality comes from Chebyshev's inequality for monotone random variables. Finally, the law of total variance yields
	\[ \Var(S)\geq \Var(\mathbb{E}(S|N))=\Var(N\mathbb{E}(\tau_1))=\frac{1-p}{p^2}\mathbb{E}(\tau_1)^2=(1-p)^{-1}\mathbb{E}(S)^2\geq \mathbb{E}(S)^2.\qedhere\]
\end{proof}

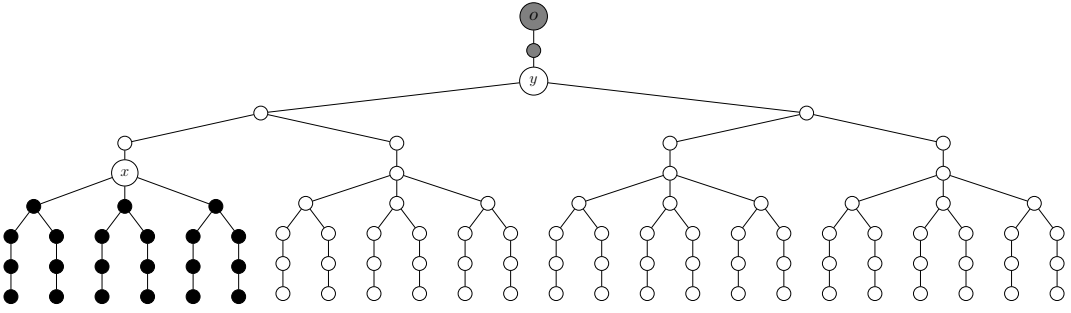
\begin{figure}[t]
	\centering
	\scalebox{0.41}{
		\begin{forest}
			for tree={math content,
				grow=south,
				circle, draw, minimum size=2ex, inner sep=4.5pt,
				s sep=10mm, l sep=3mm
			}
			[o,fill={gray},inner sep=5,font=\huge[,fill={gray}[y,inner sep=5,font=\LARGE[[[x, inner sep=5,font=\LARGE[,fill={black}[,fill={black}[,fill={black}[,fill={black}]]][,fill={black}[,fill={black}[,fill={black}]]]][,fill={black}[,fill={black}[,fill={black}[,fill={black}]]][,fill={black}[,fill={black}[,fill={black}]]]][,fill={black}[,fill={black}[,fill={black}[,fill={black}]]][,fill={black}[,fill={black}[,fill={black}]]]]]][[[[[[]]][[[]]]][[[[]]][[[]]]][[[[]]][[[]]]]]]][,fill={white}[,fill={white}[,fill={white}[,fill={white}[,fill={white}[,fill={white}[,fill={white}]]][,fill={white}[,fill={white}[,fill={white}]]]][,fill={white}[,fill={white}[,fill={white}[,fill={white}]]][,fill={white}[,fill={white}[,fill={white}]]]][,fill={white}[,fill={white}[,fill={white}[,fill={white}]]][,fill={white}[,fill={white}[,fill={white}]]]]]][,fill={white}[,fill={white}[,fill={white}[,fill={white}[,fill={white}[,fill={white}]]][,fill={white}[,fill={white}[,fill={white}]]]][,fill={white}[,fill={white}[,fill={white}[,fill={white}]]][,fill={white}[,fill={white}[,fill={white}]]]][,fill={white}[,fill={white}[,fill={white}[,fill={white}]]][,fill={white}[,fill={white}[,fill={white}]]]]]]]]]]
	\end{forest}}
	\caption{Example of a spherically symmetric tree in which $y$ is an ancestor of $x$. The set $G_{x,y}$ considered in Lemma \ref{lemma RS} corresponds to the black vertices. When studying the hitting time from $x$ to $y$, gray vertices can be discarded.}
	\label{figure2}
	
\end{figure}

Let $(X_t)$ be the lazy simple random walk on a spherically symmetric tree $T$ of height $h$. Identifying all vertices at the same level we obtain the associated birth-and-death chain $(\tilde{X}_t)$ defined on $\{0,\ldots,h\}$ (see Figure \ref{figure3}). We write $\tilde{P}$ for its transition matrix and $\tilde{\pi}$ for its stationary distribution. Given $x$, $y \in T$ so that $y$ is an ancestor of $x$, the hitting time of $y$ starting from $x$ for $(X_t)$ and for $(\tilde{X}_t)$ is the same. Moreover, we may assume that the state $y$ is absorbing since it does not change the hitting time of $y$.	The idea is to use Lemma \ref{lemma RS} to reduce the study of the hitting time $\tau_y$ to the case when $h=d(x,y)$. Then we can decompose the hitting time of $y$ as a sum of independent geometric variables. The continuous-time version of this decomposition was proved by Karlin and McGregor (see \cite[Equation (45)]{Karlin1959coincidence}), and reproved by Keilson in \cite{Keilson1979markov}. Here we use its discrete-time version, which was given by Fill (see \cite[Theorem 1.2]{Fill2009the}).

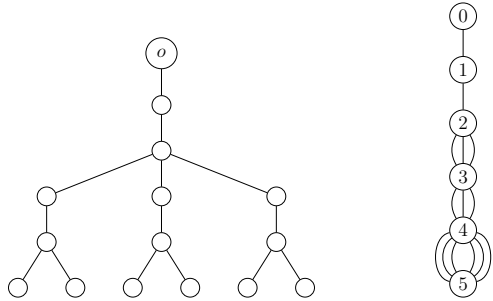
\begin{figure}[b]
	\centering
	
	\scalebox{0.5}{\subfloat{{\begin{forest}
					for tree={math content,
						grow=south,
						circle, draw, minimum size=2ex, inner sep=5pt,
						s sep=10mm, l sep=7mm
					}
					[o,font=\LARGE[[[[[][]]][[[][]]][[[][]]]]]]
	\end{forest}}}}
	\qquad \qquad
	\scalebox{0.5}{\subfloat{{\begin{forest}
					for tree={math content,
						grow=south,
						circle, draw, minimum size=2ex, inner sep=3pt,
						s sep=10mm, l sep=7mm,
						if level=5{no edge,}{}
					}
					[0,name=n0,inner sep=3,font=\LARGE[1,name=n1,font=\LARGE[2,name=n2,font=\LARGE[3,name=n3,font=\LARGE[4,name=n4,font=\LARGE[5,name=n5,font=\LARGE]]]]]]
					\draw (n2) to [bend right=30] (n3);
					\draw (n2) to [bend left=30] (n3);
					\draw (n3) to [bend right=30] (n4);
					\draw (n3) to [bend left=30] (n4);
					\draw (n4) to [bend right=30] (n5);
					\draw (n4) to [bend left=30] (n5);
					\draw (n4) to [bend right=55] (n5);
					\draw (n4) to [bend left=55] (n5);
					\draw (n4) to [bend right=80] (n5);
					\draw (n4) to [bend left=80] (n5);
	\end{forest} }}}
	\caption{Example of a spherically symmetric tree and its associated birth-and-death chain, seen as a segment with multiple edges.}
	\label{figure3}
\end{figure}

\begin{lemma}\label{lemmabirth}
	Let $(\tilde{X}_t)$ be a lazy birth-and-death chain defined on $\{0,\ldots,h\}$ satisfying $p_j\geq q_j$ for every $j \in \{0,\ldots,h-1\}$, where  $p_j$ and $q_j$ denote the birth and death probabilities, respectively. Then, 
	\[\Var_n(\tau_0)\geq \frac{1}{121}\mathbb{E}_n(\tau_0)^2 \quad \forall\, n \in  \{0,\ldots,h\}.\]
\end{lemma}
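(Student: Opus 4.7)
The plan is to combine Lemma \ref{lemma RS} with Fill's theorem, after which the upward bias $p_j \geq q_j$ is used to show that a single geometric mode captures a constant fraction of $\mathbb{E}_n(\tau_0)$.

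First I would apply Lemma \ref{lemma RS} with $x = n$ and $y = 0$. Since the underlying graph of the chain is the path $\{0, 1, \ldots, h\}$, removing $n$ disconnects $\{n+1, \ldots, h\}$ from $0$ and $G_{n,0} = \{n+1, \ldots, h\}$. The lemma gives $\tau_0 = R + S$, where $R$ is the hitting time of $0$ from $n$ in the chain restricted to $\{0, \ldots, n\}$ (so that $n$ becomes reflecting), $S$ is the total time the original chain spends in $\{n+1, \ldots, h\}$ before hitting $0$, the two summands are positively correlated, and $\Var(S) \geq \mathbb{E}(S)^2$. It therefore suffices to establish a bound of the form $\Var(R) \geq \mathbb{E}(R)^2/C$ for the restricted chain and a universal constant $C$; combining with $\Var(S) \geq \mathbb{E}(S)^2$ via the elementary Cauchy--Schwarz inequality $u^2/C + v^2 \geq (u+v)^2/(C+1)$ then yields $\Var(\tau_0) \geq \mathbb{E}(\tau_0)^2/(C+1)$, and the target $1/121$ corresponds to $C = 120$.

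For the restricted chain---a lazy birth-and-death chain on $\{0, \ldots, n\}$ starting at its top $n$ with $0$ absorbing, and inheriting $p_j \geq q_j$ for $j < n$---Fill's theorem \cite[Theorem 1.2]{Fill2009the} gives $R \stackrel{d}{=} \sum_{j=1}^{n} G_j$, where the $G_j$'s are independent geometric random variables of parameters $\theta_j = 1 - \lambda_j$, and $\lambda_1, \ldots, \lambda_n$ are the eigenvalues of the submatrix of the restricted transition matrix indexed by $\{1, \ldots, n\}$. Laziness forces $\lambda_j \in [0, 1)$, and Perron--Frobenius ensures $\lambda^* := \max_j \lambda_j \geq 1/2$; setting $a^* := 1/(1-\lambda^*) \geq 2$, the single term $G^*$ with parameter $\theta^* = 1 - \lambda^*$ contributes $\Var(G^*) = a^{*2}(1 - 1/a^*) \geq a^{*2}/2$ to $\Var(R)$. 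The task thus further reduces to proving $a^* \geq \mathbb{E}(R)/C_0$ for a universal constant $C_0$, after which one takes $C = 2 C_0^2$.

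The main obstacle is exactly this comparison between $a^*$ and $\mathbb{E}(R)$, and it is where the hypothesis $p_j \geq q_j$ is used essentially (without it a downward-biased chain has $\tau_0$ concentrated by CLT and the lemma outright fails). A natural approach is the Rayleigh bound $\theta^* \leq \langle f, (I - P_{\mathrm{sub}}) f \rangle_\mu / \langle f, f \rangle_\mu$ applied to the test function $f(j) = \mathbb{E}_j(\tau_0)$, for which $(I - P_{\mathrm{sub}}) f \equiv \mathbf{1}$; this yields $a^* \geq \mathbb{E}_\mu(f^2)/\mathbb{E}_\mu(f)$, with $\mu$ the reversible measure of the restricted chain, and it remains to compare this weighted ratio with $f(n) = \mathbb{E}(R)$. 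The recursion $q_j e_j = 1 + p_j e_{j+1}$ combined with $p_j \geq q_j$ makes $f$ weakly increasing in $j$, but the reversible measure $\mu$ satisfies $\mu(j+1)/\mu(j) = p_j/q_{j+1}$, which need not be monotone under $p_j \geq q_j$ alone; converting the local bias into a quantitative spectral concentration estimate for $\mu$ (of the right order to close the argument) is the hardest step, after which the arithmetic above produces the announced constant $121$.
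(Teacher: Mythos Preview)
Your overall strategy coincides with the paper's: reduce to the top state via Lemma~\ref{lemma RS}, apply Fill's theorem to the restricted chain, and control the top sub-eigenvalue through the Rayleigh quotient with test function $f(j)=\mathbb{E}_j(\tau_0)$. Your Cauchy--Schwarz combination $u^2/C+v^2\geq (u+v)^2/(C+1)$ is a neat variant of the paper's threshold split $\mathbb{E}(R)\gtrless 10\,\mathbb{E}(S)$, and your inequality $a^*\geq \mathbb{E}_\mu(f^2)/\mathbb{E}_\mu(f)$ is exactly the bound~(\ref{eq:trel}) (note $\mathcal{E}(f)=\mathbb{E}_{\tilde\pi}(f)$ by~(\ref{eq:dirichlet})).

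However, the proposal is incomplete: you reach $a^*\geq \mathbb{E}_\mu(f^2)/\mathbb{E}_\mu(f)$ and then explicitly defer the comparison with $f(n)$ as ``the hardest step''. That comparison is the whole content of the lemma; everything preceding it is bookkeeping. The paper closes it via Lemma~\ref{lemmaconcave}, and the ingredient you are overlooking is already sitting in the recursion you wrote down: from $q_j e_j = 1+p_j e_{j+1}$ and $p_j\geq q_j$ one gets
\[
e_j - e_{j+1} \;=\; \frac{1+(p_j-q_j)\,e_{j+1}}{q_j} \;>\; 0,
\]
so $f$ is not merely increasing but \emph{concave}. Feeding this concavity, together with the monotonicity the paper asserts for $\tilde\pi$, into Lemma~\ref{lemmaconcave} yields $\mathbb{E}_{\tilde\pi}(f^2)\geq \tfrac{1}{7}f(n)^2$; since trivially $\mathbb{E}_\mu(f)\leq f(n)$, this gives $a^*\geq f(n)/7$, hence $C_0=7$, $\Var(R)\geq \mathbb{E}(R)^2/98$ for $n=h$, and then $121$ via the $R,S$ combination. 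So there is no separate ``spectral concentration estimate for $\mu$'' to invent: the concavity of $f$, which the hypothesis $p_j\geq q_j$ delivers for free, is what does the work. (Your side remark that $\mu(j+1)/\mu(j)=p_j/q_{j+1}$ is not forced above~$1$ by $p_j\geq q_j$ alone is a fair point about the lemma in its stated generality; the paper simply asserts the needed monotonicity of $\tilde\pi$ without further comment.)
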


\begin{proof}
	First, take $n=h$. As we just observed, we may assume that $0$ is absorbing. Let $P_n$ be the corresponding sub-stochastic matrix which is the restriction of $\tilde{P}$ to $\{1,\ldots,n\}$, and let $\gamma_1\geq\ldots\geq\gamma_{n}$ be its eigenvalues. Using Theorem 1.2 in \cite{Fill2009the} to write $\tau_0$ as a sum of $n$ geometric random variables, whose parameters are $1-\gamma_j$ for $j=1,\ldots,n$, gives
	\begin{equation}\label{eq:Fill} \Var_{n}(\tau_0) = \sum_{i=1}^{n} \frac{\gamma_i}{(1-\gamma_i)^2} \geq \frac{\gamma_1}{(1-\gamma_1)^2}.
	\end{equation}
	Define $f\colon \{0,\ldots,n\}\longrightarrow \mathbb{R}$ by $f(t)=\mathbb{E}_t(\tau_0)$ for every $t \in\{0,\ldots,n\}$. From (\ref{eq:dirichlet}) we see that the Dirichlet form of $f$ can be bounded above by $\mathbb{E}_n(\tau_0)$. Moreover, since $p_j\geq q_j$ for every $j \in \{0,\ldots,h-1\}$, then the sequence $(\tilde{\pi}(j))_{j=1}^{h-1}$ is increasing and $\tilde{\pi}(0)\leq \tilde{\pi}(h)$. Thus, we can apply Lemma \ref{lemmaconcave} to the function $f$ with $w(j)=\tilde{\pi}(j)$ for $j \in \{0,\ldots,n\}$ to obtain that $\mathbb{E}_{\tilde{\pi}}(f^2)$ can be bounded below by $\frac{1}{7}\mathbb{E}_{n}(\tau_0)^2$. Thus, the Rayleigh-Ritz formula (cf., e.g., \cite[\S 90]{Halmos1974finite}) yields
	\begin{equation}\label{eq:trel}
		\frac{1}{1-\gamma_1} \geq \frac{\mathbb{E}_{\tilde{\pi}}(f^2)}{\mathcal{E}(f)}\geq \frac{1}{7}\mathbb{E}_{n}(\tau_0).
	\end{equation}
	Finally, since the chain is lazy, by using Perron-Frobenius theorem we deduce that $\gamma_1\geq \frac{1}{2}$, so (\ref{eq:Fill}) and (\ref{eq:trel}) gives
	\begin{equation}\label{eq:R}
		\Var_{n}(\tau_0)\geq \frac{1}{98} \mathbb{E}_{n}(\tau_0)^2.
	\end{equation}
	Assume now that $n<h$. Using Lemma \ref{lemma RS} we can write $\tau_0=R+S$, where $R$ is the time needed for the chain restricted to $\{0,\ldots,n\}$ starting at $n$ to hit $0$. On the one hand, if $\mathbb{E}(R)\geq 10 \mathbb{E}(S)$ we have
	\[\Var_{n}(\tau_0) \geq \Var(R)\geq \frac{1}{98}\mathbb{E}(R)^2\geq\frac{10^2}{98(11^2)}\mathbb{E}_{n}(\tau_0)^2\geq \frac{1}{119} \mathbb{E}_{n}(\tau_0)^2,\]
	where the first inequality follows from the positive correlation of $R$ and $S$, and the second inequality follows from (\ref{eq:R}). On the other hand, if $\mathbb{E}(R)\leq 10\mathbb{E}(S)$, we have
	\[\Var_{n}(\tau_0) \geq \Var(S)\geq \mathbb{E}(S)^2\geq\frac{1}{11^2}\mathbb{E}_{n}(\tau_0)^2=\frac{1}{121}\mathbb{E}_{n}(\tau_0)^2,\]
	where the first inequality uses the positive correlation of $R$ and $S$, and the second inequality uses (\ref{eq:lemmaRS}).
\end{proof}

The next result follows immediately from Lemma \ref{lemmabirth}.

\begin{corollary}\label{yancestor}
	Let $T$ be a spherically symmetric tree. Let $x$, $y \in T$ such that $y$ is an ancestor of $x$. Then, the lazy simple random walk on $T$ satisfies
	\[ \Var_x(\tau_y) \geq \frac{1}{121} \mathbb{E}_x(\tau_y)^2.\]
\end{corollary}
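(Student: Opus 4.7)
The plan is to apply Lemma \ref{lemmabirth} directly after two bookkeeping reductions. Let $k$ and $n$ be the levels of $y$ and $x$ respectively, so $k<n$ since $y$ is a strict ancestor of $x$.

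The first step is to justify that $\tau_y$ for the walk on $T$ started at $x$ has the same distribution as the first hitting time of level $k$ by the collapsed chain $(\tilde{X}_t)$ started at $n$. This is where the ancestor hypothesis does real work: any level-$k$ vertex $z\neq y$ has its lowest common ancestor with $x$ strictly above $y$, so every path in $T$ from $x$ to $z$ must traverse $y$. Hence the first vertex at level $k$ visited by the walk from $x$ is necessarily $y$, and this identifies $\tau_y$ with the level-$k$ hitting time of $(\tilde{X}_t)$.

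The second step is to shift and invoke Lemma \ref{lemmabirth}. Relabeling levels $\{k,\ldots,h\}$ as $\{0,\ldots,h-k\}$ yields a lazy birth-and-death chain $(\hat{X}_t)$ on $\{0,\ldots,h-k\}$ for which $\tau_0$ from $n-k$ matches $\tau_y$ from $x$ in law. The hypothesis $\hat{p}_j\geq \hat{q}_j$ of Lemma \ref{lemmabirth} holds trivially at $j=0$ (where $\hat{q}_0=0$) and, for $j\geq 1$, corresponds to level $k+j\leq h-1$ of $T$, which is a non-leaf level; spherical symmetry then forces $\degree_{k+j}\geq 2$, and
\[ \hat{p}_j = \frac{\degree_{k+j}-1}{2\degree_{k+j}} \geq \frac{1}{2\degree_{k+j}} = \hat{q}_j. \]
Lemma \ref{lemmabirth} applied at starting state $n-k$ then produces the advertised bound $\Var_x(\tau_y)\geq \frac{1}{121}\mathbb{E}_x(\tau_y)^2$.

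There is essentially no obstacle here: the paper itself flags the corollary as immediate from Lemma \ref{lemmabirth}. The only point worth pausing on is the identification in the first step, which relies entirely on $y$ being an ancestor of $x$ rather than an arbitrary vertex at level $k$; this is precisely what guarantees that the level-$k$ hitting time of the collapsed chain recovers $\tau_y$ and not just the hitting time of some level-$k$ vertex.
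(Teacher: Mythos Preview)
Your proposal is correct and follows exactly the route the paper intends: the paper states that the corollary is immediate from Lemma~\ref{lemmabirth}, having already noted (in the paragraph preceding that lemma) that when $y$ is an ancestor of $x$ the hitting time $\tau_y$ coincides with the level hitting time for the collapsed birth-and-death chain and that $y$ may be taken absorbing. Your two bookkeeping reductions simply make explicit the relabeling $\{k,\ldots,h\}\to\{0,\ldots,h-k\}$ and the verification of $\hat p_j\ge \hat q_j$, which is all that is needed.
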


Finally, the case when $x$ is an ancestor of $y$ follows from the next lemma, which shows that hitting times for random walks on graphs are not concentrated when the starting point $x$ is a central node. Indeed, it proves something more general. 

\begin{lemma}\label{lem:central}
	Let $(X_t)$ be an irreducible Markov chain on a graph $G$. Then, 
	\[ \Var_x(\tau_y) \geq \pi(G_{x,y})^2 \mathbb{E}_x(\tau_y)^2 \quad \forall \, x \neq y \in G.\]
\end{lemma}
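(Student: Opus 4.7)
The plan is to combine Lemma \ref{lemma RS} with a short Wald/Kac computation on excursions from $x$ back to $x$. First I apply Lemma \ref{lemma RS} to write $\tau_y = R + S$, where $S$ is the time spent in $G_{x,y}$ before hitting $y$; the lemma gives $\Var(S) \geq \mathbb{E}(S)^2$ and positive correlation between $R$ and $S$, so that $\Var_x(\tau_y) \geq \Var(R) + \Var(S) + 2\operatorname{Cov}(R,S) \geq \Var(S) \geq \mathbb{E}_x(S)^2$. This reduces the claim to showing
\[ \mathbb{E}_x(S) \geq \pi(G_{x,y})\,\mathbb{E}_x(\tau_y). \]

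The next step, which is the one nontrivial observation, is structural: the vertex $x$ separates $G_{x,y}$ from $y$ in $G$, so any path from $y$ back into $G_{x,y}$ must pass through $x$. Hence the chain cannot visit $G_{x,y}$ during $[\tau_y,\tau_{x,y}]$, where $\tau_{x,y}$ is the commute time, and therefore $S$ equals the total occupation time of $G_{x,y}$ during the whole commute interval $[0,\tau_{x,y})$, not merely up to $\tau_y$. This trick converts a hitting-time quantity into a commute-time quantity, for which standard ergodic identities apply cleanly.

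Finally, I decompose $[0,\tau_{x,y})$ into successive i.i.d.\ excursions from $x$ back to $x$, stopping at the first excursion that visits $y$; the number $N$ of excursions is geometric with parameter $\probb{x}{\tau_y<\tau_x^+}$, hence a stopping time. Writing $L_i$ and $S_i$ for the length of the $i$-th excursion and the time it spends in $G_{x,y}$, Wald's identity applied to the i.i.d.\ sequences $(L_i)$ and $(S_i)$ yields
\[ \mathbb{E}_x(\tau_{x,y}) = \mathbb{E}(N)\,\mathbb{E}(L_1), \qquad \mathbb{E}_x(S) = \mathbb{E}(N)\,\mathbb{E}(S_1). \]
Kac's lemma gives $\mathbb{E}(L_1) = 1/\pi(x)$, and summing the standard excursion identity $\mathbb{E}_x[\text{visits to } z \text{ during one excursion}] = \pi(z)/\pi(x)$ over $z \in G_{x,y}$ gives $\mathbb{E}(S_1) = \pi(G_{x,y})/\pi(x)$. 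Dividing yields $\mathbb{E}_x(S) = \pi(G_{x,y})\,\mathbb{E}_x(\tau_{x,y}) \geq \pi(G_{x,y})\,\mathbb{E}_x(\tau_y)$, which combined with the first paragraph completes the proof. The main conceptual hurdle is spotting the structural observation that extends the relevant occupation time from $[0,\tau_y)$ to the full commute interval without changing its value; Wald and Kac then close the argument essentially by inspection.
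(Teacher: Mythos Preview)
Your proof is correct and follows essentially the same route as the paper: both apply Lemma~\ref{lemma RS}, then use the separation structure to identify $S$ with the occupation time of $G_{x,y}$ over the full commute interval $[0,\tau_{x,y})$, and finally compute $\mathbb{E}_x(S)=\pi(G_{x,y})\,t_{x\leftrightarrow y}$. The only cosmetic difference is that the paper invokes \cite[Lemma 10.5]{levin2017markov} directly for this last identity, whereas you unpack it via Wald and Kac.
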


\begin{proof}
	Take $x\neq y \in G$. Using Lemma \ref{lemma RS} we write $\tau_y=R+S$, where $S$ is the time that the chain spends in $G_{x,y}$ until it hits $y$ (see Figure \ref{figure4}). We can apply \cite[Lemma 10.5]{levin2017markov} with $\mu=\nu=\delta_x$ and $\tau=\tau_{x,y}$ to get
	\[ \mathbb{E}(S)\geq\sum_{z \in G_{x,y}} \mathbb{E}_x\left (\sum_{t=0}^{\tau_{x,y}}1_{\{X_t=z\}}\right )=\sum_{z\in G_{x,y}} t_{x\leftrightarrow y}\pi(z)=\pi(G_{x,y})t_{x\leftrightarrow y}\geq \pi(G_{x,y})\mathbb{E}_x(\tau_y).\]
	The result now follows from (\ref{eq:lemmaRS}).
\end{proof}

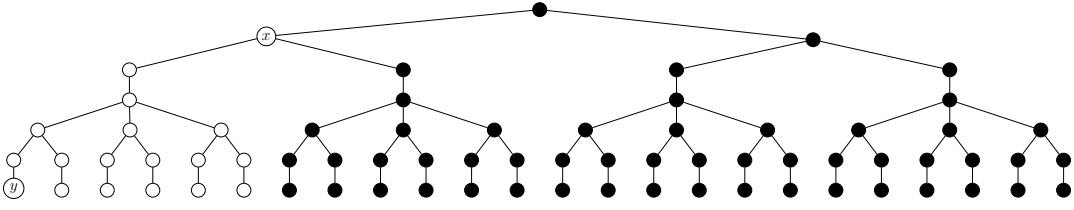
\begin{figure}[t]
	\centering
	\scalebox{0.41}{
		\begin{forest}
			for tree={math content,
				grow=south,
				circle, draw, minimum size=2ex, inner sep=4.5pt,
				s sep=10mm, l sep=3mm
			}
			[,fill={black}[x,inner sep=2.5,font=\LARGE[[[,fill={white}[,fill={white}[y,fill={white},inner sep=2.5,font=\LARGE]][,fill={white}[,fill={white}]]][,fill={white}[,fill={white}[,fill={white}]][,fill={white}[,fill={white}]]][,fill={white}[,fill={white}[,fill={white}]][,fill={white}[,fill={white}]]]]][,fill={black}[,fill={black}[,fill={black}[,fill={black}[,fill={black}]][,fill={black}[,fill={black}]]][,fill={black}[,fill={black}[,fill={black}]][,fill={black}[,fill={black}]]][,fill={black}[,fill={black}[,fill={black}]][,fill={black}[,fill={black}]]]]]][,fill={black}[,fill={black}[,fill={black}[,fill={black}[,fill={black}[,fill={black}]][,fill={black}[,fill={black}]]][,fill={black}[,fill={black}[,fill={black}]][,fill={black}[,fill={black}]]][,fill={black}[,fill={black}[,fill={black}]][,fill={black}[,fill={black}]]]]][,fill={black}[,fill={black}[,fill={black}[,fill={black}[,fill={black}]][,fill={black}[,fill={black}]]][,fill={black}[,fill={black}[,fill={black}]][,fill={black}[,fill={black}]]][,fill={black}[,fill={black}[,fill={black}]][,fill={black}[,fill={black}]]]]]]]
	\end{forest}}
	\caption{Example of a spherically symmetric tree in which $x$ is an ancestor of $y$. The set $G_{x,y}$ considered in Lemma \ref{lem:central} corresponds to the black vertices.}
	\label{figure4}
\end{figure}

\begin{proof}[Proof of Theorem \ref{theo:hitting}]
	Assume first that $x\neq y$, so we have $\tau_y=\tau_y^+$ if the chain starts at $x$. Let $q \in T$ be the nearest common ancestor of $x$ and $y$. First, the time to go from $x$ to $y$ is the sum of the time to go from $x$ to $q$ and the time to go from $q$ to $y$, which are independent random variables, and so $\Var_x(\tau_y)=\Var_x(\tau_q)+\Var_q(\tau_y)$. Next, Corollary \ref{yancestor} gives $\Var_x(\tau_q)\geq \frac{1}{121}\mathbb{E}_x(\tau_q)^2$. If $y=q$ we have finished. For $y\neq q$, Lemma \ref{lem:central} applied to $q$ and $y$ gives $\Var_q(\tau_y)\geq\pi(T_{q,y})^2\mathbb{E}_q(\tau_y)^2$. Observe that if $q$ is either the closest branching point to the root or a descendant of it, then $\pi(T_{q,y})\geq \frac{1}{2}$. Consequently,
	\[\Var_x(\tau_y)\geq \frac{1}{121}(\mathbb{E}_x(\tau_q)^2 +\mathbb{E}_q(\tau_y)^2)\geq \frac{1}{242}\left (\mathbb{E}_x(\tau_q)+\mathbb{E}_q(\tau_y)\right )^2=\frac{1}{242} \mathbb{E}_x(\tau_y)^2.\]
	Otherwise, we must have $q=x$. Let $(x=u_0,\ldots,u_{d(x,y)}=y)$ be the (unique) path joining $x$ and $y$. Let $u_\ell$ be the closest branching point to $x$. If there are no branching points, set $u_\ell=y$. As before, we have that $\Var_x(\tau_y)=\Var_x(\tau_{u_\ell})+ \Var_{u_\ell}(\tau_y)$. Applying Corollary \ref{yancestor} to $x$ and $u_\ell$ gives $\Var_x(\tau_{u_\ell})\geq \frac{1}{121}\mathbb{E}_x(\tau_{u_\ell})^2$. If $u_\ell=y$ there is nothing else to prove. Otherwise, Lemma \ref{lem:central} applied to $u_\ell$ and $y$ gives $\Var_{u_\ell}(\tau_y)\geq \pi(T_{u_\ell,y})^2\mathbb{E}_{u_\ell}(\tau_y)^2$, where $\pi(T_{u_\ell,y})\geq \frac{1}{2}$. Therefore,
	\[\Var_x(\tau_y)\geq \frac{1}{121}(\mathbb{E}_x(\tau_{u_\ell})^2 + \mathbb{E}_{u_\ell}(\tau_y)^2) \geq\frac{1}{242}\left (\mathbb{E}_x(\tau_{u_\ell})+\mathbb{E}_{u_\ell}(\tau_y)\right )^2=
	\frac{1}{242}\mathbb{E}_x(\tau_y)^2.\]
	Asumme now that $x=y$. Observe that $\mathbb{E}_x(\tau_x^+)-1=\sum_{z\in V} P(x,z)\mathbb{E}_z(\tau_x)$. Thus,
	\[\Var_x(\tau_x^+)\geq \sum_{z \in V} P(x,z)\Var_z(\tau_x)\geq\sum_{z \in V} P(x,z)\frac{\mathbb{E}_z(\tau_x)^2}{242}\geq \frac{(\mathbb{E}_x(\tau_x^+)-1)^2}{121}\geq\frac{\mathbb{E}_x(\tau_x^+)^2}{484}.\qedhere \]
\end{proof}

\section{Stability under rough isometries}\label{sectionrough}
Consider two graphs $G=(V,E)$ and $G'=(V',E')$ with graph distances $d$ and $d'$. A function $\phi\colon V \longrightarrow V'$ is a \textit{rough isometry} if there are $\alpha>0$ and $\beta>0$ such that,
\begin{equation}\label{rough}\alpha^{-1} d(x,y)- \beta\leq d'(\phi(x),\phi(y))\leq \alpha d(x,y) + \beta \quad \forall \, x , y \in V,
\end{equation}
and such that every vertex of $G'$ is within distance $\beta$ of the image of $V$. If such a function exists, we say that $G$ and $G'$ are roughly isometric.

The relaxation time of the lazy simple random walk on a graph is preserved, up to a constant, under rough isometries. Moreover, this constant only depends on the degree of the graph and the constants $\alpha$ and $\beta$ given by the rough isometry. This fact follows from the path comparison method described in \cite[Theorem 13.20]{levin2017markov} and Lemma 3.14 in \cite{diaconis1996logarithmic}. Although the mixing time is not stable under rough isometries in general (see \cite{ding2013sensitivity}), it was proved in \cite{peres2015mixing} that for (weighted) trees, the mixing time of the lazy simple random walk is stable under bounded perturbation of the edge weights. More generally, Theorem 1.1 in \cite{addario-berry2018mixing} shows that for general trees, the mixing time is preserved up to a constant under rough isometries. This constant only depends on the degree of the graph and the constants $\alpha$ and $\beta$. Recall that condition (\ref{criterion}) is equivalent to cutoff for lazy simple random walks on trees. Thus, the previous observations give the following result.

\begin{proposition}\label{prop}
	Let $(T_n)_{n\in\mathbb{N}}$ and $(T'_n)_{n\in\mathbb{N}}$ be sequences of trees with bounded degree $\Delta$. Assume that $T_n$ and $T'_n$ are roughly isometric with $\alpha$ and $\beta$ not depending on $n$. Then, the lazy simple random walk on $(T_n)_{n\in\mathbb{N}}$ exhibits cutoff if and only if it does on $(T'_n)_{n\in\mathbb{N}}$.
\end{proposition}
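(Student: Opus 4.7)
The plan is to string together three ingredients already flagged in the discussion preceding the proposition: stability of the relaxation time under rough isometry for bounded-degree graphs, stability of the mixing time under rough isometry for trees, and the characterization of cutoff for lazy simple random walks on sequences of trees in terms of divergence of the ratio $\tmix/\trel$.

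Write $\tmix^n, \trel^n$ and $\tmix^{\prime n}, \trel^{\prime n}$ for the mixing and relaxation times of the lazy simple random walk on $T_n$ and on $T'_n$, respectively. First I would apply the canonical-path comparison method of \cite[Theorem 13.20]{levin2017markov} together with \cite[Lemma 3.14]{diaconis1996logarithmic} in both directions: the rough isometry lets one pull every edge of one graph back to a short path in the other with bounded congestion, where the congestion bound depends only on $\Delta$, $\alpha$, $\beta$. This produces constants $c_1, c_2 > 0$, depending only on those parameters, such that $c_1 \trel^{\prime n} \leq \trel^n \leq c_2 \trel^{\prime n}$ for every $n$. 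Next, since each $T_n$ and $T'_n$ is a tree of bounded degree, \cite[Theorem 1.1]{addario-berry2018mixing} supplies the analogous two-sided comparison $c_3 \tmix^{\prime n} \leq \tmix^n \leq c_4 \tmix^{\prime n}$ with constants again depending only on $\Delta$, $\alpha$, $\beta$.

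Taking ratios, $\tmix^n/\trel^n$ and $\tmix^{\prime n}/\trel^{\prime n}$ differ by a bounded multiplicative factor that is uniform in $n$, so one of the two sequences diverges if and only if the other does. To finish, I would invoke the characterization of cutoff for lazy simple random walks on sequences of trees proved in \cite{basu2017characterization}, which asserts that cutoff occurs on such a sequence if and only if $\tmix^n/\trel^n \to \infty$; applying this criterion to both sides yields the stated equivalence. The main obstacle here is not conceptual but a matter of careful bookkeeping: one must verify that the constants produced by \cite[Theorem 13.20]{levin2017markov} combined with \cite[Lemma 3.14]{diaconis1996logarithmic}, and by \cite[Theorem 1.1]{addario-berry2018mixing}, indeed depend only on $\Delta$, $\alpha$, $\beta$ and not on $n$ or on the sizes of the trees, since only then does the comparison of ratios translate into the asymptotic equivalence of cutoff.
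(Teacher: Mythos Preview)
Your proposal is correct and follows essentially the same approach as the paper: the proposition is stated as an immediate consequence of the preceding discussion, which invokes exactly the three ingredients you list---stability of $\trel$ via \cite[Theorem 13.20]{levin2017markov} and \cite[Lemma 3.14]{diaconis1996logarithmic}, stability of $\tmix$ on trees via \cite[Theorem 1.1]{addario-berry2018mixing}, and the equivalence of cutoff with condition~(\ref{criterion}) for trees from \cite{basu2017characterization}. Your bookkeeping remark about the constants depending only on $\Delta,\alpha,\beta$ is precisely the content the paper asserts in that discussion.
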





\end{document}